\newtheorem*{lemma*}{Lemma}
\newtheorem*{corollary*}{Corollary}
\newtheorem{theorem}{Theorem}
\newtheorem{lemma}[theorem]{Lemma}
\theoremstyle{remark}\newtheorem{remark}[theorem]{Remark}
\theoremstyle{example}
\def\dist{\operatorname{dist}}
\def\capp{\operatorname{cap}}
\newcommand\CC{{\mathbb C}}
\newcommand\RR{{\mathbb R}}
\newcommand\DD{{\mathbb D}}
\newcommand\BB{{\mathbb B_2}}
\begin{document}
	\title[Cyclicity in Dirichlet-type spaces]{Cyclic polynomials in Dirichlet-type spaces in the unit ball of $\mathbb C^2$}
	\author{\L ukasz Kosi\'nski}\email{lukasz.kosinski@uj.edu.pl}
	\author{Dimitrios Vavitsas}\email{dimitris.vavitsas@doctoral.uj.edu.pl}
	\address{Institute of Mathematics, Faculty of Mathematics and Computer Science, Jagiellonian University, \L ojasiewicza 6, 30-348 Krak\'ow, Poland}
	\subjclass{Primary: 47A13. Secondary: 32A37, 32A60}
	\keywords{Dirichlet-type spaces, cyclic vectors, polynomials in two variables}
	\thanks{Partially supported by NCN grant SONATA BIS no.  2017/26/E/ST1/00723 of the National Science Centre, Poland}
	
	\begin{abstract}
		We characterize polynomials that are cyclic in Dirichlet-type spaces in the unit ball in $\mathbb C^2$.
	\end{abstract}

	\maketitle
	\section{Introduction}
	
	Three classical Hilbert spaces of holomorphic functions in the unit ball of $\mathbb C^n$ are the Hardy, Bergman and Drury-Arveson spaces. All of them are special cases of a space family that depends on a real parameter, called Dirichlet-type spaces. A general introduction to this theory can be found in \cite{Zhu}. 
	
	Our purpose is to characterize the polynomials that are cyclic for the shift operators on these spaces in two variables. An analogous problem for the bidisk was solved in \cite{Benetau} and shortly after extended in \cite{Knese}. A. Sola \cite{Sola} studied this problem in the unit ball. His paper is a main motivation for our research. Sola asked, in particular, for a characterization of cyclic polynomials analogous to that achieved for the bidisc. Note that it looks like a harder problem in the ball because of the absence of determinantal representations. The main aim of the paper is to give an answer to this question.
	
	To attack the problem we shall study the zeros in the sphere of a polynomial non-vanishing in the ball. Using some tools coming from semi-analytic geometry we shall show that this zero set is either finite or contains an analytic curve. The first possibility is in principle simpler to deal with and can be overcome with tools analogous to those used in \cite{Benetau}. If, in turn, the zero set contains an analytic curve, we shall make use of the necessity capacity argument from \cite{Sola} as well as a radial dilation argument. In particular, we shall show a result interesting in its own right: a polynomial $p$ is cyclic if and only if $p/p_r\to 1$, where $p_r(z,w)=p(rz, rw)$ is a radial dilation of $p$. This is connected with the problem of approximating $1/f$ in a space of analytic functions (see \cite{Sol1} for the study of this subject). In the one dimensional case the above-mentioned radial dilation observation was proven in \cite{Knese}. One-variable Dirichlet-type spaces are discussed in the textbook \cite{Blue book}.
	
	\subsection{Dirichlet-type spaces in the unit ball}
	Denote the unit ball by
	$$\BB=\{(z,w)\in \CC^2:|z|^2+|w|^2<1\},$$
	and its boundary, the unit sphere by
	$$\mathbb S_2=\{(\zeta,\eta)\in \CC^2:|\zeta|^2+|\eta|^2=1\}.$$
	Let $f:\BB \rightarrow \mathbb{C}$ be a holomorphic function with power series expansion
	$$f(z,w)=\sum_{k=0}^{\infty}\sum_{l=0}^{\infty}a_{k,l}z^{k}w^{l}.$$
	We say that $f$ belongs to the $\emph{Dirichlet-type}$ $space$ $D_{\alpha}(\BB),$ for a fixed $\alpha\in \mathbb{R},$ if
	\begin{equation}\label{norm with sum}
		|| f||_{\alpha}^{2}=\sum_{k=0}^{\infty}\sum_{l=0}^{\infty}(2+k+l)^{\alpha}\frac{k!l!}{(1+k+l)!}|a_{k,l}|^{2}<\infty.
	\end{equation}
	
	Note that these spaces are Hilbert spaces. The case when $\alpha=0$ corresponds to Hardy spase and $\alpha =-1$ to Bergman. When $\alpha=1$, $D_1(\mathbb B_2)$ coincides with the Drury-Arveson space. The Dirichlet space corresponds to the parameter $\alpha=2.$  A general introduction to function theory in the ball can be found in \cite{Rudin ball} and \cite{Zhu}. Some crucial facts about Dirichlet-type spaces and cyclic vectors in the unit ball can also be found in \cite{Sola}.
	
	The following results from function theory are well known. If $X$ is a normed space and  $C\subset X$ a convex set, then $C$ is closed in norm if and only if it is weakly closed. Moreover, since $D_\alpha(\BB)$ is a reflexible Banach space, given a sequence $\{f_n\}\subset D_\alpha(\BB),$ then $f_n \rightarrow 0$ weakly if and only if $f_n\rightarrow 0$ pointwise and $\sup_n\{||f_n||_\alpha\}<\infty.$
	
	In Dirichlet-type spaces the integral representation of the norm is achieved in a limited range of parameters:
	\begin{lemma}[see\cite{Michalska}]\label{le: equivalent int norm}
		If $\alpha\in (-1,1)$, then $||f||_\alpha$ is equivalent to $$|f|_\alpha:=\int_{\BB} \frac{||\nabla(f)||^2 - |R(f)|^2}{ (1-|z|^2 - |w|^2)^\alpha} dA(z,w).$$
	\end{lemma}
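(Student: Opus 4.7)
The plan is to diagonalize both $\|f\|_\alpha^2$ and $|f|_\alpha$ in the monomial basis and then compare the resulting scalar weights. Writing $f=\sum a_{k,l} z^k w^l$, observe that the weight $(1-|z|^2-|w|^2)^{-\alpha}$ is radial, so angular integration kills every cross-term of the form $z^k\bar z^m w^l\bar w^n$ with $(k,l)\neq(m,n)$ arising either in $\|\nabla f\|^2$ or in $|Rf|^2$. Consequently $|f|_\alpha=\sum_{k,l}|a_{k,l}|^2\, C_{k,l}^\alpha$, where
$$C_{k,l}^\alpha=\int_{\BB}\frac{k^2|z|^{2k-2}|w|^{2l}+l^2|z|^{2k}|w|^{2l-2}-(k+l)^2|z|^{2k}|w|^{2l}}{(1-|z|^2-|w|^2)^\alpha}\,dA.$$

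I would then compute $C_{k,l}^\alpha$ in closed form. After the substitution $u=|z|^2,\,v=|w|^2$, each of the three terms in the numerator reduces to a Dirichlet integral on the $2$-simplex of the form $\int\int u^a v^b(1-u-v)^{-\alpha}\,du\,dv$, which evaluates to a ratio of gamma functions. The assumption $\alpha<1$ is exactly what ensures that the boundary factor $(1-u-v)^{-\alpha}$ is integrable. A short application of $\Gamma(x+1)=x\Gamma(x)$ then collapses the three contributions into a single closed form
$$C_{k,l}^\alpha=c_\alpha\,\frac{(k+l)\,k!\,l!}{\Gamma(k+l-\alpha+3)},\qquad c_\alpha=\pi^2(2-\alpha)\Gamma(1-\alpha)>0.$$

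To finish, I would use the gamma-ratio asymptotic $\Gamma(n+2)/\Gamma(n-\alpha+3)\sim n^{\alpha-1}$ (a direct consequence of Stirling) to conclude that
$$C_{k,l}^\alpha\asymp(2+k+l)^\alpha\,\frac{k!\,l!}{(1+k+l)!}\qquad\text{for all }(k,l)\neq(0,0),$$
which matches the weights appearing in \eqref{norm with sum} up to multiplicative constants independent of $(k,l)$. The coefficient $a_{0,0}$ is annihilated by $|\cdot|_\alpha$, so the equivalence asserted in the lemma is understood after absorbing a $|f(0)|^2$ term on the seminorm side.

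The main obstacle is the algebraic cancellation that produces the compact formula for $C_{k,l}^\alpha$: individually each summand contributes a term of size $\Gamma(k+l-\alpha+2)^{-1}$, and only the specific sign pattern coming from the subtraction of $|Rf|^2$ extracts the constant $(2-\alpha)$ and shifts the gamma index upward by one. This shift is precisely what upgrades the growth from $(k+l+2)^{\alpha-1}$ to the correct $(k+l+2)^\alpha$, so without it the comparison would fail. Once this step is in hand, the boundary cases $k=0$ and $l=0$ involve the same gamma manipulations with fewer terms, and the equivalence is routine.
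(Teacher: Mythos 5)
The paper does not actually prove this lemma: it is imported wholesale from Michalska's paper \cite{Michalska}, so there is no internal argument to compare with, and your direct verification is essentially the standard proof of such norm equivalences. Your computation checks out. Torus invariance of $\BB$ and of the weight kills the off-diagonal terms, and the Dirichlet integral on the simplex gives, for $(k,l)\neq(0,0)$ and up to the normalization of $dA$,
\begin{equation*}
C_{k,l}^{\alpha}\;=\;\mathrm{const}\cdot\Gamma(1-\alpha)\,k!\,l!\left(\frac{k+l}{\Gamma(k+l+2-\alpha)}-\frac{(k+l)^{2}}{\Gamma(k+l+3-\alpha)}\right)
=\mathrm{const}\cdot\Gamma(1-\alpha)(2-\alpha)\,\frac{(k+l)\,k!\,l!}{\Gamma(k+l+3-\alpha)},
\end{equation*}
using $\Gamma(k+l+3-\alpha)=(k+l+2-\alpha)\Gamma(k+l+2-\alpha)$; this is exactly your cancellation, and the quotient of $C_{k,l}^{\alpha}$ by the weight $(2+k+l)^{\alpha}k!l!/(1+k+l)!$ in \eqref{norm with sum} depends only on $n=k+l$ and tends to a positive constant, hence is bounded above and below for $n\geq 1$. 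Two points deserve a word to make this airtight. First, the term-by-term integration needs justification, since the separate integrals of $|\partial_z f|^2$, $|\partial_w f|^2$, $|Rf|^2$ against the weight are generally infinite for $f\in D_\alpha(\BB)$; the clean fix is to integrate over $\rho\,\BB$ with $\rho<1$, where the expansions converge uniformly and cross terms still vanish by rotation invariance, and then let $\rho\to 1^-$ by monotone convergence (both the full integrand and each diagonal integrand $\|\nabla(z^kw^l)\|^2-|R(z^kw^l)|^2$ are nonnegative). Second, as you note, $|\cdot|_\alpha$ annihilates constants, so the lemma must be read as $\|f\|_\alpha^2\asymp |f(0)|^2+|f|_\alpha$; this is the form in which the paper actually uses it, so your caveat is the right one. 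Incidentally, your argument only uses $\alpha<1$ (integrability of $(1-u-v)^{-\alpha}$ and $\Gamma(1-\alpha)>0$); the restriction $\alpha>-1$ is simply inherited from the cited statement and causes no harm.
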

	Above, $\nabla(f)(z,w)=(\partial_zf(z,w),\partial_wf(z,w))$ denotes the \emph{holomorphic gradient} of a holomorphic function $f$ and $$R(f)(z,w)=z\partial_zf(z,w)+w\partial_wf(z,w)$$ is its \emph{radial derivative}. Moreover, $dA(z,w)$ denotes the \emph{normalized area measure}. 
	
	Lemma~\ref{le: equivalent int norm} allows us to deal with Dirichlet norms $D_\alpha$ using analytic methods whenever $\alpha\in (-1,1).$
	
	Polynomials are dense in the spaces $D_{\alpha}(\BB),$ $\alpha \in \RR,$ and $z\cdot f,w\cdot f\in D_\alpha(\BB)$ whenever $f\in D_\alpha(\BB).$ Also, if $\alpha>2$ the spaces $D_\alpha(\BB)$ are algebras, see \cite{Sola}, and, by definition, $D_\alpha(\BB)\subset D_\beta(\BB),$ when $\alpha\geq \beta.$ 
	
	A crucial relation among these spaces is the following:
	\begin{lemma}\label{relation among the spaces}
		Let $f$ be a holomorphic function in $\BB.$ Then
		$$f\in D_\alpha(\BB)\quad  \text{if and only if} \quad  2f+R(f)\in D_{\alpha-2}(\BB).$$
	\end{lemma}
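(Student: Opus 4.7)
The plan is to prove this directly from the series definition of the norm given in \eqref{norm with sum}; in fact I expect to establish the stronger statement that the two norms are equal.

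First I would expand $f$ in its power series $f(z,w)=\sum_{k,l\ge 0} a_{k,l}z^k w^l$ and compute the effect of the two differential operators on the series level. Since $R(z^k w^l)=(k+l)z^k w^l$, the operator $2+R$ acts diagonally on monomials: $(2+R)(z^k w^l) = (2+k+l)z^k w^l$. Therefore the power series of $g:=2f+R(f)$ is
$$g(z,w)=\sum_{k,l\ge 0}(2+k+l)\, a_{k,l}\, z^k w^l.$$

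Next I would substitute this series into the defining formula \eqref{norm with sum} for the $D_{\alpha-2}$-norm, replacing the coefficients $a_{k,l}$ by $(2+k+l)a_{k,l}$ and the weight exponent $\alpha$ by $\alpha-2$:
$$\|g\|_{\alpha-2}^2 = \sum_{k,l\ge 0} (2+k+l)^{\alpha-2}\,\frac{k!\,l!}{(1+k+l)!}\,(2+k+l)^2 |a_{k,l}|^2.$$
Collecting the two powers of $(2+k+l)$ yields exactly the weight $(2+k+l)^\alpha$ appearing in $\|f\|_\alpha^2$, so $\|g\|_{\alpha-2}=\|f\|_\alpha$. Hence one norm is finite if and only if the other is, which is the claimed equivalence.

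There is essentially no obstacle here: the only point to mention is that the operators $R$ and $2\cdot\mathrm{id}$ commute with the termwise series manipulations, which is justified because a holomorphic function in $\BB$ has an absolutely convergent power series on compact subsets, so $R(f)$ can be computed term by term, and the equality of norms is an identity between nonnegative series that holds in $[0,\infty]$ regardless of convergence. Thus the lemma reduces to a one-line algebraic identity on the weights $(2+k+l)^\alpha$.
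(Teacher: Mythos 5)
Your computation is correct: since $2+R$ acts diagonally on monomials with factor $(2+k+l)$, the coefficient-wise definition \eqref{norm with sum} immediately gives $\|2f+R(f)\|_{\alpha-2}=\|f\|_\alpha$, which is precisely the ``elementary observation'' the paper invokes without writing it out. Nothing is missing; the termwise justification you mention is all that is needed.
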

	This elementary observation allows us to use Lemma~\ref{le: equivalent int norm} for a wide range of parameters $\alpha$. 
	
	A \emph{multiplier} of $D_\alpha(\BB)$ is a holomorphic function $\phi:\BB\rightarrow\CC$ that satisfies $\phi\cdot f\in D_\alpha(\BB)$ for all $f\in D_\alpha(\BB).$ The set of all multipliers will be denoted by $M(D_\alpha(\BB)).$ As mentioned above, polynomials, as well as holomorphic functions in a  neighbourhood of the closed unit ball, are multipliers in every space $D_\alpha(\BB)$. 
	
	\subsection{Shift operators and cyclic vectors}
	Consider two bounded linear operators $S_1,S_2:D_{\alpha}(\BB)\rightarrow D_{\alpha}(\BB)$ defined by $S_i:f\mapsto z_if.$ We say that $f\in D_\alpha(\BB)$ is a \emph{cyclic vector} if the closed invariant subspace, i.e.
	$$[f]:=\mathrm{clos}\, \mathrm{span}\{z_1^k z_2^lf:k=0,1,...,l=0,1,...\}$$
	coincides with $D_\alpha(\BB)$ (the closure is taken with respect to the $D_\alpha(\BB)$ norm). In addition, we have the following equivalent definition of cyclicity: $f$ is cyclic if and only if there exist a sequence of polynomials $\{p_n\}$ such that $p_nf\rightarrow 1$ in norm. In other words, $f$ is cyclic if and only if $1\in [f].$
	
	Examples of cyclic functions in various Dirichlet-type spaces  were provided in \cite{Sola}. It is well known (see e.g. \cite{Sola}) that the cyclicity of a function $f\in D_\alpha(\BB)$ is intimately connected with its zero set
	$$\mathcal Z(f) =\{(z,w)\in\CC^2:f(z,w)=0\}.$$
	Since $D_\alpha(\BB)$ enjoys the \emph{bounded point evaluation property} a function that is cyclic cannot vanish inside the unit ball. Any non-zero constant function is cyclic in each space $D_\alpha(\BB).$ Moreover, if $\alpha>2,$ then $f\in D_\alpha(\BB)$ is cyclic precisely when $f$ has no zeros in the closed unit ball. Points lying in the set $\mathcal{Z}(p)\cap \mathbb{S}_2,$ where $p$ is a given polynomial, will be called boundary zeros.
	
	An important result that helps us to restrict the cyclicity problem of polynomials to irreducible ones is the following:
	if $f\in D_\alpha(\BB)$ and $\phi\in M(D_\alpha(\BB)),$ then $\phi f$ is cyclic if and only if both $f$ and $\phi$ are cyclic.
	
	\subsection{Main result}
	Our aim is to characterize the polynomials that are cyclic in Dirichlet-type spaces in the setting of the ball. As we shall see the situation in the ball mirrors the bidisk case, meaning that the cyclicity of a function is inextricably linked to the nature of the boundary zeros. Our goal is to separate the problem into two parts: polynomials with finitely many boundary zeros and polynomials with infinitely many boundary zeros.
	
	The main result is as follows:
	\begin{theorem}\label{main result,theorem}
		Let $p\in \CC[z,w]$ be an irreducible polynomial non-vanishing in the unit ball.
		\begin{enumerate}
			\item If $\alpha\leq 3/2,$ then $p$ is cyclic in $D_\alpha(\BB).$
			\item If $3/2<\alpha\leq 2,$ then $p$ is cyclic in $D_\alpha(\BB)$ if and only if $\mathcal{Z}(p)\cap \mathbb{S}_2$ is empty or finite.
			\item If $\alpha>2,$ then $p$ is cyclic in $D_\alpha(\BB)$ if and only if $\mathcal Z(p)\cap \mathbb{S}_2=\emptyset.$
		\end{enumerate}
	\end{theorem}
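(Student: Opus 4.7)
The proof splits into three cases according to the range of $\alpha$, with a further subdivision in the middle case based on the structure of the boundary zero set $\mathcal{Z}(p)\cap \mathbb{S}_2$. The case $\alpha>2$ is already recorded in the introduction as a consequence of the algebra property: cyclicity of $p$ forces $1/p\in D_\alpha(\BB)$, hence $p$ non-vanishing on $\overline{\BB}$, and the converse is immediate because $1/p$ is then holomorphic in a neighbourhood of $\overline{\BB}$ and polynomially approximable.

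For $\alpha\leq 2$ the first step is the dichotomy announced in the introduction: the semi-analytic nature of $\mathcal{Z}(p)\cap \mathbb{S}_2$ forces it to be either finite or to contain a real-analytic curve. Assume first it is finite. I plan to follow the B\'en\'eteau et al.\ template and construct explicit approximants $q_n$ to $1/p$ from logarithmic test functions localized at each boundary zero $\zeta_j$, of the shape $\log\bigl(1/(1-\langle\cdot,\zeta_j\rangle)\bigr)/\log(1/\varepsilon)$, truncated by Taylor polynomials. To bound $\|q_np-1\|_\alpha$, I would iterate Lemma~\ref{relation among the spaces} to reduce to the integrable range $\alpha\in(-1,1)$ and then apply the integral norm of Lemma~\ref{le: equivalent int norm}; the resulting weighted-gradient estimate is localized near the $\zeta_j$ and should survive all the way up to $\alpha\leq 2$.

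Now suppose $\mathcal{Z}(p)\cap \mathbb{S}_2$ contains a real-analytic curve $\gamma$. For $\alpha>3/2$ I would adapt Sola's capacity obstruction: construct a probability measure on $\gamma$ whose anisotropic potential produces a continuous linear functional on $D_\alpha(\BB)$ that annihilates $[p]$ but not the constant function $1$. The critical threshold $\alpha=3/2$ is exactly where the one-real-dimensional mass of $\gamma$ first becomes visible against the weight $(1-|z|^2-|w|^2)^{-\alpha}$. For $\alpha\leq 3/2$ the plan is to prove the radial dilation characterization: $p$ is cyclic if and only if $\|p/p_r-1\|_\alpha\to 0$ as $r\to 1^-$, with $p_r(z,w)=p(rz,rw)$. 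One direction is cheap: since $p_r$ is non-vanishing on $\overline{\BB}$, the function $1/p_r$ is a multiplier, so $p/p_r\in [p]$, and convergence gives $1\in [p]$. The quantitative direction reduces, via Lemma~\ref{relation among the spaces}, to estimating an integral of $\bigl\|\nabla((p-p_r)/p_r)\bigr\|^2(1-|z|^2-|w|^2)^{-\alpha}$ over $\BB$, which should scale as a negative power of $1-r$ depending on $\alpha$, with the cutoff for integrability lying exactly at $\alpha=3/2$.

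The main obstacle I anticipate is this last step: one must control, uniformly along $\gamma$ and uniformly in $r$ close to $1$, the interplay among the vanishing of $p-p_r$, the blow-up of $1/p_r$, and the singularity of the weight. Local real-analytic coordinates adapted to $\gamma$ together with careful accounting of the vanishing order of $p$ along $\gamma$ should suffice, and matching the sharp threshold $\alpha=3/2$ against the capacity lower bound from the necessity side is where the two halves of the argument must line up exactly.
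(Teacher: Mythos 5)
Your overall architecture coincides with the paper's (finite/curve dichotomy via the curve selection lemma, finite boundary zeros handled up to $\alpha\le 2$, a capacity obstruction for $\alpha>3/2$, radial dilations for $\alpha\le 3/2$, and the algebra argument for $\alpha>2$), but the proposal has a genuine gap exactly at the step that decides the threshold $3/2$. Your reduction of the dilation estimate to ``an integral of $\|\nabla((p-p_r)/p_r)\|^2(1-|z|^2-|w|^2)^{-\alpha}$'' is not the right quantity, and the pure-gradient bound fails at the endpoint. After applying Lemma~\ref{relation among the spaces} one must control the $D_{-1/2}$ integral norm of $2f+R(f)$, i.e.\ the quantity $\|\nabla g\|^2-|R(g)|^2$ for $g\in\{q_r,\,z\partial_zq_r,\,w\partial_wq_r\}$ against the weight $(1-|z|^2-|w|^2)^{1/2}$, where $q_r=p/p_r$. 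If you drop the $-|R(g)|^2$ term, the estimate already diverges for the model $p(z,w)=1-w$: there $\partial_w(w\partial_wq_r)\asymp (1-r)/|1-rw|^{3}$, and $\int_{\BB}(1-r)^2(1-|z|^2-|w|^2)^{1/2}|1-rw|^{-6}\,dA\asymp(1-r)^{-1/2}\to\infty$. The paper's proof lives or dies on the identity \eqref{eq:|f|locally1}, which converts $-|R(g)|^2$ into an extra factor $(1-|z|^2-|w|^2)$ on the gradient term plus the tangential term $|\bar z\partial_wg-\bar w\partial_zg|^2$; estimating that tangential term requires the Puiseux expansion $h_j(z)=1+\gamma z^2+o(z^2)$ with $|\gamma|\le 1/2$ (forced by non-vanishing in the ball, see \eqref{problematic h}), the B\l{}ocki-type inequality $|\bar z h_j(z)-|w|^2h_j'(z)|\le C|1-h_j(z)w|^{1/2}$ built on Lemma~\ref{Blocki}, and the Forelli--Rudin estimates. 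None of this appears in your plan beyond ``should suffice,'' and since the sharp exponent $3/2$ is produced precisely by this cancellation, the essential content of the theorem is missing. (A minor point: you only need $\sup_{r}\|p/p_r\|_\alpha<\infty$ together with pointwise convergence and weak closedness of $[p]$, not norm convergence $\|p/p_r-1\|_\alpha\to 0$.)

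Two further steps are asserted rather than proved. For the non-cyclicity half you need to know that any analytic curve $\gamma$ inside $\mathcal Z(p)\cap\mathbb S_2$ is complex-tangential, i.e.\ $d(\gamma(t),\gamma(t'))\asymp|t-t'|$ for the anisotropic distance $d(\zeta,\eta)=|1-\langle\zeta,\eta\rangle|^{1/2}$; for a general analytic curve in the sphere, e.g.\ $t\mapsto(e^{it},0)$, one has $d\asymp|t-t'|^{1/2}$ and the energy computation would place the capacity threshold at $\alpha=1$, contradicting part (1). The paper obtains the needed distance estimate again from \eqref{problematic h} and then transfers positive capacity from the model curve $\mathcal Z(1-2zw)\cap\mathbb S_2$ via a bi-Lipschitz map before invoking Theorem~\ref{sola}; your ``one-real-dimensional mass'' heuristic does not capture this anisotropy. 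In the finite case, your logarithmic-approximant scheme works for the model factors $1-\langle\cdot,\zeta_j\rangle$ but not directly for a general $p$: you still need a \L{}ojasiewicz-type lower bound $|p|\ge C\dist(\cdot,\mathcal Z(p)\cap\mathbb S_2)^q$ on $\mathbb S_2$ to compare $p$ with those factors. The paper uses exactly this inequality, forms $Q=\prod_i s_i^q/p$, shows $Q\in D_2(\BB)$ via Lemma~\ref{relation among the spaces}, and concludes from cyclicity of $pQ$ and the multiplier property of $p$ --- a cleaner route than redoing the approximant estimates for each $p$. Your treatment of $\alpha>2$ is fine and agrees with the known result the paper cites.
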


	\section{The zero set of a polynomial non-vanishing in the ball}\label{sec:nv}
	
	We begin by studying the boundary zeros of a polynomial non-vanishing in the ball.
	\begin{lemma}\label{zero}
		$\mathcal Z(p)\cap \mathbb{S}_2$ is either a finite set or there is a non-constant analytic curve contained in it.
	\end{lemma}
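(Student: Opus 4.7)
The plan is to view $V := \mathcal{Z}(p) \cap \mathbb{S}_2$ as a compact real semi-algebraic subset of $\RR^4 \simeq \CC^2$ and to exploit the structure theory of such sets. Writing $p = u + iv$ with $u,v$ real polynomials in the four real variables $(x_1,y_1,x_2,y_2)$ associated with $z = x_1+iy_1$, $w = x_2+iy_2$, the set $V$ is cut out by the three real polynomial equations
\[
u = 0, \quad v = 0, \quad x_1^2+y_1^2+x_2^2+y_2^2 = 1,
\]
so it is a closed and bounded (hence compact) semi-algebraic subset of $\RR^4$.

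The dichotomy will come from the \emph{curve selection lemma} for semi-algebraic sets: if $S\subset\RR^n$ is semi-algebraic and $q\in\overline{S}$, then there exists a real-analytic arc $\gamma:[0,\epsilon)\to\RR^n$ with $\gamma(0)=q$ and $\gamma((0,\epsilon))\subset S$ (see, e.g., Bochnak--Coste--Roy, \emph{Real Algebraic Geometry}). I will apply this as follows. If every point of $V$ is isolated in $V$, then $V$ is discrete; being compact, it is therefore finite, which is the first alternative. Otherwise, pick $q\in V$ that is an accumulation point of $V$. Then $q\in\overline{V\setminus\{q\}}$ and $V\setminus\{q\}$ is semi-algebraic, so the curve selection lemma furnishes a real-analytic arc $\gamma$ with $\gamma(0)=q$ and $\gamma((0,\epsilon))\subset V\setminus\{q\}$. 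Since $V$ is closed the whole arc lies in $V$, and since $\gamma$ avoids $q$ on $(0,\epsilon)$ it is non-constant. This $\gamma$ is the desired analytic curve inside $\mathcal{Z}(p)\cap\mathbb{S}_2$.

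The main obstacle here is not algebraic but essentially bibliographic: one has to identify the precisely correct form of the curve selection lemma (versus, e.g., Milnor's complex-analytic version, or Łojasiewicz's semi-analytic one) and to clarify what is meant by ``analytic curve'' in the statement. The interpretation consistent with the rest of the paper is a non-trivial real-analytic arc in $\mathbb{S}_2$, which is exactly what is needed for the capacity estimates and radial-dilation arguments invoked later. Once this framework is set up, both alternatives of the lemma drop out immediately without any further computation, and the hypothesis that $p$ is non-vanishing on $\BB$ is in fact not used at this stage --- it will only be exploited in the subsequent cyclicity analysis.
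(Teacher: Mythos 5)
Your proof is correct and follows essentially the same route as the paper: recognize $\mathcal Z(p)\cap\mathbb S_2$ as a compact semi-algebraic set and apply a curve selection lemma at an accumulation point (the paper uses the semi-analytic Bruhat--Cartan--Wallace version from Denkowska--Denkowski, you cite the semi-algebraic version, which is equally valid here). The finite/accumulation-point dichotomy and the conclusion are the same as in the paper.
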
	
	
	It is convenient to look at $\mathcal Z(p)$ $\cap$ $\mathbb{S}_2$ as at a semi-algebraic set.  We are interested in the case where $\mathcal{Z}(p)$ $\cap$ $\mathbb{S}_2$ contains at least one accumulation point.
	
	Recall that a set $A\subset \RR^N$ is said to be \emph{semi-analytic} (resp. \emph{semi-algebraic}), if for any $x\in \RR^N,$ there are a neighbourhood $U=U(x)$ and a finite number of real analytic functions (resp. polynomials) $f_i,$ $g_{ij}$ in $U$ such that
	$$A\cap U=\bigcup\limits_{j=1}^{p}\bigcap\limits_{i=1}^{q}\{x\in U:f_i(x)=0, g_{ij}(x)>0\}.$$
	
	According to this definition, $\mathcal Z(p)\cap \mathbb{S}_2$ is a semi-algebraic (and thus semi-analytic), as it is an intersection of the sphere and two semi-algebraic sets $\{\textrm{Re} (p)=0\}$ and $\{\textrm{Im} (p) =0\}$. Hence Lemma~\ref{zero} is a consequence of the following semi-analytic version of the \emph{Bruhat-Cartan-Wallace Curve Selecting Lemma}: 
	\begin{lemma}[see \cite{Selection lemma}]\label{selecting}
		Let $A$ be a \emph{semi-analytic} set and suppose that $a\in \overline{A\setminus\{a\}}.$ Then there exists an analytic function $\gamma:(0,1)\rightarrow A$ yielding a semi-analytic curve and such that $\lim_{t\rightarrow 0^+}\gamma(t)=a.$
	\end{lemma}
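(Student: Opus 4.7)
The plan is to localise at $a$, peel off the Boolean structure of $A$ to land on a single ``basic'' semi-analytic set, and then run an induction on the dimension of the underlying real-analytic variety.

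First, translating I may assume $a=0$. The definition of semi-analyticity supplies a neighbourhood $U$ of $0$ and finitely many real-analytic $f_{ij},g_{ij}$ on $U$ with
\[
A\cap U \;=\; \bigcup_{j=1}^{p}\bigcap_{i=1}^{q_j}\{x\in U : f_{ij}(x)=0,\ g_{ij}(x)>0\}.
\]
Since $0\in\overline{A\setminus\{0\}}$, the pigeonhole principle singles out one basic piece
\[
A_0=\{x\in U : f_1(x)=\dots=f_k(x)=0,\ g_1(x)>0,\dots,g_\ell(x)>0\}
\]
with $0\in\overline{A_0\setminus\{0\}}$, so it is enough to produce an analytic curve $\gamma\colon(0,1)\to A_0$ with $\gamma(t)\to 0$.

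Second, if $k=0$ then $A_0$ is open and contains $0$ in its closure, and any straight segment $\gamma(t)=tv$ along an accumulation unit vector lies in $A_0$ for small $t$. Otherwise set $V=\{f_1=\dots=f_k=0\}$ and decompose the analytic germ $(V,0)$ into irreducible components. Pigeonholing once more, some component $V_*$ carries points of $A_0$ accumulating at $0$; on $V_*$ each locus $\{g_j\le 0\}$ is a proper analytic subset (otherwise $A_0$ would be empty near $0$), so their union $W$ is a proper analytic subset of $V_*$. It thus remains to find an analytic arc in $V_*\setminus W$ through $0$.

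Third, I induct on $d=\dim V_*$. For $d=1$, the local parametrisation theorem for real-analytic curves yields finitely many analytic branches of $V_*$ through $0$, and $W$ meets each in a discrete set; since $A_0$ accumulates at $0$, one such branch contains $A_0$-points arbitrarily close to $0$ and supplies the required arc. For $d\ge 2$, a generic affine hyperplane $H$ through $0$ meets $V_*$ in a semi-analytic set of dimension $d-1$ still accumulating at $0$, with $W\cap H$ still a proper subset — a semi-analytic Sard-type argument ensures this for almost every $H$. The inductive hypothesis applied to $(V_*\setminus W)\cap H$ then delivers the arc.

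I expect the genuine difficulty to be concentrated in the analytic-geometric inputs of the third step: the parametrisation of real-analytic curves at a possibly singular point and the genericity of hyperplane slices that preserve accumulation at $0$. Both rest on the Łojasiewicz local structure theorem for real-analytic sets (equivalently, Hironaka-style resolution of singularities), which is the true engine of the proof. Once these are admitted, the remainder of the argument is routine combinatorics on sign patterns together with a clean dimension induction.
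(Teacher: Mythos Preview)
The paper does not prove this lemma at all; it is the Bruhat--Cartan--Wallace curve selection lemma, quoted with a reference to \cite{Selection lemma} and used as a black box. There is therefore no paper-proof to compare against, only the question of whether your sketch stands on its own.

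It does not, and the gap is in the treatment of the strict inequalities. The reduction to a single basic piece $A_0$ is fine, but in the third step you assert that on the irreducible component $V_*$ each locus $\{g_j\le 0\}$ is a ``proper analytic subset''. This is false: $\{g_j\le 0\}$ is merely semi-analytic and will in general have the same dimension as $V_*$ (take $V_*=\RR^2$, $g_1(x,y)=x$; then $\{g_1\le 0\}$ is a closed half-plane, not an analytic subvariety). Your parenthetical justification only shows $\{g_j\le 0\}\cap V_*\neq V_*$ near $0$, which is far weaker. Once $W$ is allowed to be full-dimensional in $V_*$, the hyperplane-slice induction collapses: $V_*\setminus W$ need not be dense in $V_*$, and if it accumulates at $0$ only along a thin set, a generic hyperplane through $0$ will miss it entirely. (Even your base case $d=1$ is misargued: $W$ does not meet each branch in a discrete set; what actually saves you there is that each $g_j$ composed with a branch parametrisation is real-analytic and hence has constant sign on one side of $0$.) The classical proofs handle the inequalities by a genuinely different mechanism---either Milnor's Lagrange-multiplier construction (minimising $\|x\|^2$ on $A_0$ over small spheres, which cuts out an analytic curve) or the introduction of an auxiliary variable turning $g_j>0$ into an equation such as $t^2\prod_j g_j(x)=1$, thereby reducing to the pure-variety case---and it is precisely this mechanism that your outline is missing.
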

	
	\section{Polynomials with finitely many boundary zeros}\label{sec:f}
	
	The case when a polynomial does not vanish on the closed ball is obvious to deal with. The simplest non-trivial case occurs when $\mathcal Z(p)\cap \mathbb{S}_2$ is finite. This case is relatively simple and can be overcome with tools that were used for Dirichlet-type spaces over the bidisc:
	\begin{theorem}\label{finite}
		Let $p\in \CC[z,w]$ be a polynomial non-vanishing in the unit ball with finitely many zeros in $\mathbb{S}_2.$ Then $p$ is cyclic in $D_\alpha(\BB)$ precisely when $\alpha\leq 2.$
	\end{theorem}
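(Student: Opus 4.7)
The plan is to handle the two ranges of $\alpha$ by distinct arguments.

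For $\alpha>2$: if $p$ has at least one boundary zero $a\in \mathcal Z(p)\cap \mathbb S_2$, non-cyclicity is forced by the continuous embedding $D_\alpha(\BB)\hookrightarrow A(\dBB)$ recorded in the introduction. Evaluation at $a$ is a bounded linear functional on $D_\alpha(\BB)$ that vanishes on the dense set $\{z^k w^\ell p:k,\ell\geq 0\}$, hence on all of $[p]$, but not on the constant $1$, so $1\notin[p]$. If instead $p$ has no boundary zeros at all, then $1/p$ is holomorphic on a neighbourhood of $\dBB$, hence a multiplier of every $D_\alpha(\BB)$, and $p\cdot(1/p)=1\in[p]$ immediately; this covers the vacuous end of the ``precisely when'' clause.

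For $\alpha\leq 2$: the strategy is to produce explicit polynomials $q_n$ with $\|q_np-1\|_\alpha\to 0$, built from the one-variable Brown--Shields scheme. The starting point is the baseline polynomial $L(z,w)=1-z$, whose single boundary zero is $(1,0)$. Setting
\[
q_n(z)=\sum_{k=0}^{n}\Bigl(1-\tfrac{\log(k+1)}{\log(n+1)}\Bigr)z^k,
\]
a telescoping yields $q_n(z)(1-z)-1=-\sum_{k=1}^{n}\tfrac{\log(1+1/k)}{\log(n+1)}z^k$. Substituting this into the series norm \eqref{norm with sum} and using $\log(1+1/k)\sim 1/k$ gives
\[
\|q_n(z)(1-z)-1\|_\alpha^2\leq \frac{C}{(\log(n+1))^2}\sum_{k=1}^{n}(k+1)^{\alpha-3},
\]
which tends to $0$ precisely when $\alpha\leq 2$, the case $\alpha=2$ being critical and handled by the extra $\log n$ from the sum.

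To reach a general polynomial $p$ with finitely many boundary zeros $\{a_1,\dots,a_N\}$, I would proceed zero by zero: for each $a_i$ apply a unitary of $\CC^2$ translating $a_i$ to $(1,0)$, and construct a local approximant modelled on the Brown--Shields $q_n$ but adapted to the vanishing pattern of $p$ at $a_i$. The individual approximants are then assembled into a single sequence via the multiplier cyclicity principle recorded in the introduction (products of cyclic multipliers are cyclic), so that $\|q_np-1\|_\alpha\to 0$ results.

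The principal obstacle is the passage from the baseline $L=1-z$ to a general $p$. Naively one would hope for a factorisation $p=L\cdot h$ with $h$ a multiplier having a multiplier inverse, but this fails when $p$ vanishes tangentially at some $a_i$, since the quotient $w/(1-z)$ is already unbounded on $\BB$ near $(1,0)$. The intended fix is to bypass exact factorisation and estimate $\|q_np-1\|_\alpha$ directly through the equivalent integral norm of Lemma~\ref{le: equivalent int norm}, shifted via Lemma~\ref{relation among the spaces} so that the parameter lands in $(-1,1)$. One then carries out local integral estimates in a shrinking neighbourhood of each isolated boundary zero and controls the complement trivially; the finiteness of $\mathcal Z(p)\cap \mathbb S_2$ is what makes the localisation work and prevents error accumulation.
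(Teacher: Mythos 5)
Your $\alpha>2$ direction and your baseline computation are fine: the non-cyclicity in the presence of a boundary zero is anyway available from the fact quoted in the introduction (for $\alpha>2$, cyclic iff no zeros in $\dBB$; note the embedding $D_\alpha(\BB)\hookrightarrow A(\dBB)$ you invoke is not actually recorded in the paper, though it is not needed), and your Brown--Shields approximants do show that $1-z$ is cyclic in $D_\alpha(\BB)$ for all $\alpha\le 2$, which is the same model polynomial the paper uses (the paper simply quotes its cyclicity).

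The genuine gap is the passage from this single-zero model to a general $p$ with finitely many boundary zeros, which is the actual content of the theorem and which your proposal leaves as an unexecuted plan. You correctly identify the obstruction (tangential vanishing rules out a factorisation $p=(1-z)\cdot h$ with $h$ invertible in the multiplier algebra), but the proposed remedy --- ``local approximants adapted to the vanishing pattern'' plus ``direct local integral estimates near each zero'' --- contains no mechanism at all for bounding $|p|$ from below near its boundary zeros, which is exactly what any such estimate of $\|q_np-1\|_\alpha$ would require; nor can the assembly step run ``via the multiplier cyclicity principle,'' since $p$ (typically irreducible) does not factor into pieces each carrying a single boundary zero, and the approximants $q_n$ are global objects, not localizable ones. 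The paper supplies precisely the missing mechanism, and by a different route than norm approximation: \L ojasiewicz's inequality for $|p|^2$ on the compact set $\mathbb{S}_2$, combined with $\dist((\zeta,\eta),\mathcal Z(p)\cap\mathbb{S}_2)\ge C\prod_i|s_i(\zeta,\eta)|$ for the rotated models $s_i=\pi\circ\mathcal U_i$, $\pi(z,w)=1-z$, gives $|p|\ge C\prod_i|s_i|^{q}$ on $\mathbb{S}_2$ for some $q\in\mathbb N$. Hence $Q=\prod_i s_i^{q}/p$ is bounded on $\dBB$, and increasing $q$ makes $Q$ smooth enough that $2Q+R(Q)$ lies in the Hardy space, so $Q\in D_2(\BB)$ by Lemma~\ref{relation among the spaces}. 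Then $pQ=\prod_i s_i^{q}$ is a finite product of cyclic multipliers, hence cyclic, and since $p$ is a multiplier and $Q\in D_\alpha(\BB)$ for $\alpha\le2$, the two-factor principle ($\phi f$ cyclic iff $\phi$ and $f$ are) yields cyclicity of $p$. Without this comparison step, or a genuinely carried-out substitute for it, your argument does not prove the statement for general $p$; it only re-proves the model case.
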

	
	The idea is to compare a polynomial with a product of cyclic polynomials. The function $(z,w)\mapsto |p(z,w)|^2$ is real analytic on $\CC^2,$ and hence, one can apply \emph{\L{}ojasiewicz's inequality} to it on the compact set $\mathbb{S}_2$, see \cite{Lojiasiewicz inequality}. Moreover, there are a constant $C>0$ and a natural number $q$ such that
	$$|p(\zeta,\eta)|\geq C\cdot \dist((\zeta, \eta), \mathcal Z(p)\cap \mathbb{S}_2)^q,$$
	for all $(\zeta,\eta)\in \mathbb{S}_2.$ The distance above is considered with respect to the Euclidean norm.
	
	\begin{proof}[Proof of Theorem~\ref{finite}] Let $p$ be a polynomial, non-vanishing in the ball with finitely many boundary zeros. Let $\mathcal Z(p)\cap \mathbb{S}_2=\{(\zeta_1,\eta_1),...,(\zeta_n,\eta_n)\}.$ Take polynomials $s_1,...,s_n$, which are cyclic in $D_\alpha(\BB)$ precisely when $\alpha\leq 2,$ and such that $\mathcal Z(s_i)\cap \mathbb{S}_2=\{(\zeta_i,\eta_i)\}.$ The polynomials $s_i$ can be trivially constructed as compositions of $\pi(z,w):=1-z$ with unitary matrices $\mathcal{U}_i$ that satisfy $\mathcal{U}_i(\zeta_i,\eta_i)^{T}=(1,0)$, i.e. $s_i = \pi\circ \mathcal U_i$. Since the distance is invariant under unitary transformations, we find that $\dist((\zeta,\eta),(\zeta_i,\eta_i))\geq |\pi\circ\mathcal{U}_i(\zeta,\eta)|=|s_i(\zeta,\eta)|.$ Clearly all $s_j$ are trivially bounded from above on $\mathbb S_2$  by 2, whence $\dist((\zeta,\eta),\mathcal{Z}(p)\cap \mathbb S_2)\geq C_1\prod_{i=1}^{n}|s_i(\zeta,\eta)|,$ for some constant $C_1>0.$
		
		Summing up, by the above-mentioned \L ojasiewicz's inequality there exist a constant $C_2>0$ and $q\in \mathbb N$ such that 
		$$|p(\zeta,\eta)|\geq C_2 \prod_{i=1}^{n}|s_i(\zeta,\eta)|^q,$$
		for all $(\zeta,\eta)\in \mathbb{S}_2.$ Then the rational function $Q$ defined by
		$$Q(z,w)=\frac{\prod_{i=1}^{n}s_i(z,w)^q}{p(z,w)},$$
		is bounded on $\overline{\mathbb{B}}_2.$ Increasing $q\in \mathbb{N}$ we make this function as smooth as we like in  $\mathbb{S}_2.$ In particular, the function $2Q+R(Q)$ lies in the Hardy space for $q$ big enough. Thus, making use of the Lemma~\ref{relation among the spaces} we conclude that $Q$ lives in $D_2(\BB)$. 
		
		The function $pQ$ is cyclic in $D_\alpha(\BB)$, since it is a product of cyclic polynomials, and $pQ\in D_\alpha(\BB),$ for $\alpha\leq 2.$ The assertion follows as $p$ is a multiplier.
	\end{proof}
	
	\section{Cyclicity via radial dilations. Cyclicity for infinitely many boundary zeros.}
	The aim of this section is to prove the following:
	\begin{theorem}\label{main}
		Let $p\in \CC[z,w]$ be a polynomial non-vanishing in the unit ball. Then $p$ is cyclic in $D_\alpha(\BB)$ for any $\alpha\leq 3/2.$
	\end{theorem}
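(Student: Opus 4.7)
The plan is to deploy the radial-dilation strategy announced in the introduction. Since the inclusion $D_{3/2}(\BB)\subset D_\alpha(\BB)$ is continuous for $\alpha\leq 3/2$, it suffices to prove cyclicity in $D_{3/2}(\BB)$. For $r\in(0,1)$ set $p_r(z,w):=p(rz,rw)$. Because $p$ is non-vanishing on $\BB$ and $(rz,rw)\in\BB$ for every $(z,w)\in\overline{\BB}$, the polynomial $p_r$ is non-vanishing on $\overline{\BB}$; hence $1/p_r$ is holomorphic in a neighbourhood of $\overline{\BB}$. Its Taylor coefficients decay geometrically, so the Taylor partial sums approximate $1/p_r$ in $\|\cdot\|_{3/2}$. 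Since $p$ is a multiplier of $D_{3/2}(\BB)$, multiplying these approximants by $p$ shows that $p/p_r\in[p]$ for every such $r$. The whole theorem then reduces to proving that $p/p_r\to 1$ in $D_{3/2}(\BB)$ as $r\to 1^-$: by the closedness of $[p]$, this yields $1\in[p]$.

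To estimate $\|p/p_r-1\|_{3/2}$, apply Lemma~\ref{relation among the spaces} with $Tf:=2f+Rf$ to move to the index $-1/2$, where Lemma~\ref{le: equivalent int norm} supplies the integral representation
\[
\|p/p_r-1\|_{3/2}^2\;\asymp\;\int_{\BB}(1-|z|^2-|w|^2)^{1/2}\bigl(\|\nabla T(p/p_r-1)\|^2-|R\,T(p/p_r-1)|^2\bigr)\,dA.
\]
Using the easily-verified identity $Rp_r=(Rp)_r$ and the quotient rule, a direct calculation writes $T(p/p_r)$ together with its first derivatives as rational functions with polynomial numerators (in $z$, $w$ and $r$) and denominators $p_r^k$, $k\leq 4$. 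The various combinations appearing in the numerators --- notably $p-p_r$ and the Wronskian-like expression $p\cdot(Rp)_r-Rp\cdot p_r$ --- all vanish identically at $r=1$ and so supply factors of size $O(1-r)$ on compact subsets of $\BB$.

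The main obstacle will be to bound, uniformly in $r$, the resulting weighted integrals of the form $\int_{\BB}(1-|z|^2-|w|^2)^{1/2}|N_r|^2/|p_r|^k\,dA$ and to show they tend to zero as $r\to 1^-$. Near a boundary zero of $p$ the factor $|p_r|$ degenerates --- of order $\dist(\,\cdot\,,\mathcal Z(p)\cap\mathbb S_2)^q$ uniformly in $r$ by a \L{}ojasiewicz-type lower bound applied to $p$ and transported to $p_r$ --- so the integrand is genuinely singular; the only compensating ingredient is the weight $(1-|z|^2-|w|^2)^{1/2}$. A Forelli--Rudin-type weighted integral estimate, matched against the \L{}ojasiewicz exponent on $|p_r|$ and the vanishing of the Wronskian-like numerators at $r=1$, is exactly what pushes the estimate through at the critical weight exponent $1/2$ (equivalently $\alpha=3/2$); a strictly smaller weight exponent would fail precisely when $\mathcal Z(p)\cap\mathbb S_2$ contains an analytic curve, consistently with the necessity part of Theorem~\ref{main result,theorem}. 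If one is content with cyclicity alone and not the full norm convergence, the argument can be shortened: it is enough to establish the uniform bound $\sup_{0<r<1}\|p/p_r\|_{3/2}<\infty$; combined with the obvious pointwise convergence $p/p_r\to 1$ and the reflexivity-based weak-convergence criterion recalled in the preliminaries, this already delivers $p/p_r\to 1$ weakly, and the weak closure of the convex set $[p]$ yields $1\in[p]$.
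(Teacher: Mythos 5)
Your overall skeleton is the same as the paper's: reduce to $\alpha=3/2$, pass to $2f+R f$ at index $-1/2$ to use the integral norm of Lemma~\ref{le: equivalent int norm}, note that $1/p_r$ is a multiplier so $p/p_r\in[p]$, and conclude via the uniform bound $\sup_r\|p/p_r\|_{3/2}<\infty$ together with pointwise convergence, reflexivity and weak closedness of $[p]$. That reduction is exactly the paper's Lemma~\ref{missing} and is fine.

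The genuine gap is in the only hard step, the uniform bound itself, and the route you sketch for it would not reach the critical exponent. First, a \L{}ojasiewicz lower bound $|p|\geq C\dist(\cdot,\mathcal Z(p)\cap\mathbb S_2)^q$ on the sphere, even if it could be ``transported to $p_r$'' uniformly in $r$ (which you do not justify: $\mathcal Z(p_r)=\tfrac1r\mathcal Z(p)$ moves with $r$ and the bound you need is inside the ball, uniformly as $r\to1^-$), comes with an uncontrolled exponent $q$; combined only with the generic $O(1-r)$ gains from $p-p_r$ and your Wronskian-type numerators, such a bound cannot single out the threshold $\alpha=3/2$ --- indeed the paper uses \L{}ojasiewicz precisely in the finite-zero case (Theorem~\ref{finite}), where it only yields cyclicity up to $\alpha\leq2$ with an arbitrary $q$. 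The sharp value $3/2$ comes from finer structure that your sketch never extracts: (i) the local Weierstrass/Puiseux factorization $p=\alpha(z)\prod_j(1-h_j(z)w)$ near a boundary zero, where non-vanishing in the ball forces $h_j(z)=1+\gamma z^2+o(z^2)$ with $|\gamma|\leq1/2$ (see \eqref{problematic h}); (ii) the identity \eqref{eq:|f|locally1}, which splits $\|\nabla f_r\|^2-|R f_r|^2$ into a gradient term carrying an extra factor $(1-|z|^2-|w|^2)$ and the tangential term $|\bar z\partial_w f_r-\bar w\partial_z f_r|^2$ --- your plan of estimating weighted integrals of $|N_r|^2/|p_r|^k$ by absolute values discards exactly this cancellation, without which the integral diverges at weight $1/2$; (iii) the B\l{}ocki-based estimate of Lemma~\ref{le:bar z h j(rz) - r|w|^2 h' j(rz)}, $|\bar z h_j(z)-|w|^2h_j'(z)|\leq C|1-h_j(z)w|^{1/2}$, which provides the square-root gain (exponent $5/2$ instead of $3$) needed for the tangential term; and (iv) the Forelli--Rudin reduction followed by a separate, delicate treatment of the extremal case $|\gamma|=1/2$, where an anisotropic change of variables is required. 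Without these ingredients (or a substitute for them), the ``main obstacle'' you identify remains exactly that: the estimate at $\alpha=3/2$ is asserted, not proved.
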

	
	To prove Theorem \ref{main} we shall use \emph{radial dilation} of a function $f:\BB\rightarrow \CC$. It is defined for $r\in(0,1)$ by $f_r(z,w)=f(rz,rw).$  
	To prove Theorem~\ref{main} it is enough to prove the following:
	
	\begin{lemma}\label{missing} If $p\in \CC[z,w]$ does not vanish on $\BB$ and $\alpha\leq 3/2$, then $||p/p_r||_\alpha<\infty$ as $r\to 1^-$.
	\end{lemma}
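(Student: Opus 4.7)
By the monotonicity of the $D_\alpha$-norms in $\alpha$ (read directly from \eqref{norm with sum}), it is enough to prove the bound at the critical parameter $\alpha = 3/2$. I would invoke Lemma~\ref{relation among the spaces}, which transforms the claim into a $D_{-1/2}$-bound for $g_r := 2(p/p_r) + R(p/p_r)$. Since $-1/2 \in (-1,1)$, Lemma~\ref{le: equivalent int norm} permits us to work with the integral norm, so the task is reduced to the uniform-in-$r$ estimate of
$$I(r) := \int_\BB \bigl(|\nabla g_r|^2 - |R g_r|^2\bigr)\,(1-|z|^2-|w|^2)^{1/2}\,dA(z,w).$$

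Using the identity $R(p_r) = (Rp)_r$, which is an elementary chain-rule computation, one finds the closed form
$$g_r = \frac{(2p + Rp)\,p_r \;-\; p\,(Rp)_r}{p_r^{\,2}},$$
and observes that $g_r \to 2$ pointwise on $\BB$ as $r \to 1^-$. Writing $h_r := g_r - 2$, the numerator acquires an overall factor $(1-r)$, so $h_r$, $\nabla h_r$ and $R h_r$ each vanish linearly in $1-r$ on every compact subset of $\BB$. The integrand in $I(r)$ is therefore $O((1-r)^2)$ on compacta and causes no trouble.

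The delicate part is the analysis near the boundary zero set $\mathcal Z(p) \cap \mathbb S_2$, which by Lemma~\ref{zero} may even contain a whole analytic curve. There $p_r^{\,2}$ becomes small, and I would exploit \L ojasiewicz's inequality for $|p|^{2}$ on $\mathbb S_2$ to obtain, uniformly for $r$ close to $1$, a bound of the form
$$|p_r(z,w)| \geq C\,\dist\bigl((z,w),\,\mathcal Z(p_r)\bigr)^{N}$$
on a neighbourhood of $\overline{\BB}$, noting that $\mathcal Z(p_r)$ sits at distance of order $1-r$ from $\mathcal Z(p)\cap\mathbb S_2$. Combining this lower bound with the $(1-r)^2$ gain from $h_r$, the weight $(1-|z|^2-|w|^2)^{1/2}$, and the extra tangential vanishing supplied by the pointwise algebraic identity
$$|\nabla h|^{2} - |R h|^{2} = (1-|z|^2)|h_w|^{2} + (1-|w|^2)|h_z|^{2} - 2\,\mathrm{Re}(z\bar w\, h_z\,\overline{h_w}),$$
together with standard Forelli--Rudin-type estimates for integrals of the form $\int_\BB (1-|z|^2-|w|^2)^{\beta}|1 - \langle (z,w),(\zeta,\eta)\rangle|^{-s}\,dA$, one produces a uniform-in-$r$ integrable majorant for the integrand.

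I expect the exponent bookkeeping in this last step to be the main obstacle. Concretely, one must verify that the vanishing furnished by $(1-r)^{2}$, by the weight $(1-|z|^2-|w|^2)^{1/2}$, and by the tangential/radial split above strictly dominates the blow-up of the denominators $|p_r|^{2k}$ appearing after differentiating $g_r$, near an arbitrary boundary zero of $p$. The threshold works precisely when $\alpha \leq 3/2$; beyond that value, the exponent of the weight is reduced past what the remaining vanishing can compensate, and the estimate breaks.
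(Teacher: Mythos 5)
Your opening reductions coincide with the paper's: reduce to $\alpha=3/2$, apply Lemma~\ref{relation among the spaces} to pass to a $D_{-1/2}$ bound for $2(p/p_r)+R(p/p_r)$, use the integral norm of Lemma~\ref{le: equivalent int norm}, handle compact subsets trivially, and split the integrand into a radial and a tangential part. (Minor slip: in your pointwise identity the weights are interchanged; the correct form is $|\nabla h|^2-|Rh|^2=(1-|z|^2)|h_z|^2+(1-|w|^2)|h_w|^2-2\,\mathrm{Re}(z\bar w\,h_z\overline{h_w})$, equivalently the paper's \eqref{eq:|f|locally1}.) But the part you defer --- ``the exponent bookkeeping near an arbitrary boundary zero'' --- is the entire content of the lemma, and the tool you propose for it cannot deliver it. A \L ojasiewicz lower bound $|p_r|\geq C\,\dist(\cdot,\mathcal Z(p_r))^{N}$ comes with an uncontrolled exponent $N$ (it can be large when the boundary zero is degenerate), and a bound of that crude form, combined with the $(1-r)$ gain and the weight $(1-|z|^2-|w|^2)^{1/2}$, does not close at $\alpha=3/2$; it would at best give cyclicity for some range of $\alpha$ depending on $N$, whereas the threshold $3/2$ is sharp (Theorem~\ref{non-cyclic}). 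Asserting that ``the threshold works precisely when $\alpha\leq 3/2$'' is exactly the statement to be proved, and nothing in the proposal produces it.

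What the paper actually uses, and what is missing from your plan, is the fine local structure of $\mathcal Z(p)$ at a boundary zero forced by non-vanishing in the ball. After a unitary rotation one writes $p=\alpha(z)\prod_j(1-h_j(z)w)$ near $(0,1)$ (Weierstrass/Puiseux), and the inequality $|h_j(z)|^2\leq (1-|z|^2)^{-1}$ forces the second-order tangency $h_j(z)=1+\gamma z^2+o(z^2)$ with $|\gamma|\leq 1/2$; this, not a generic \L ojasiewicz exponent, is the source of the number $3/2$. Two further quantitative inputs are indispensable: the B\l ocki-lemma estimate $|\bar z h_j(z)-|w|^2h_j'(z)|\leq C|1-h_j(z)w|^{1/2}$, which supplies the half-power gain in the tangential term $|\bar z\partial_w f_r-\bar w\partial_z f_r|$ (exponent $5/2$ instead of $3$ in the denominator), and the Forelli--Rudin estimate, which reduces everything to explicit one-variable integrals such as $\int_\Omega (1-r)^2\,(1-r^2(1-|z|^2)|h(rz)|^2)^{-5/2}\,dA(z)$; even there the extremal case $|\gamma|=1/2$ needs a separate change of variables. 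Your proposal contains none of these steps, so as it stands it is a plausible outline of the easy reductions with the genuinely hard part replaced by an inadequate tool and an unproved claim.
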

	Indeed, if Lemma~\ref{missing} holds, then $\phi_r\cdot p\rightarrow 1$ weakly, where $\phi_r:=1/p_r.$ Since $\phi_r$ extends holomorphically past the closed unit ball, $\phi_r$ are multipliers, and hence, $\phi_r\cdot p\in[p].$ Finally, $1$ is weak limit of $\phi_r\cdot p$ and $[p]$ is weakly closed. It is clear that $1\in [p],$ and hence, $p$ is cyclic.
	
	Moreover, it is enough to prove that $||p/p_r||_\alpha<\infty,$ as $r\rightarrow 1^{-},$ for $\alpha=3/2.$ Then the case $\alpha<3/2$ follows since the inclusion $D_{3/2}(\BB)\hookrightarrow D_\alpha(\BB)$ is a compact linear map and weak convergence in $D_{3/2}(\BB)$ gives weak convergence in $D_\alpha(\BB).$
	
	\begin{remark}\label{rem:|fr|-1/2}
		As was mentioned above, it is enough to show that $||p/p_r||_{3/2}< \infty,$ as $r\rightarrow 1^{-}.$ Lemma~\ref{relation among the spaces} shows that this is equivalent to $||2p/p_r+R(p/p_r)||_{-1/2}<\infty,$ as $r\rightarrow 1^{-}.$ The advantage of this approach is that now one can use an equivalent integral norm. In what follows we shall show that $|f_r|_{-1/2}<\infty,$ as $r\rightarrow 1^{-},$ where $f_r$ is one of the following functions: $p/p_r,$ $z\partial_z(p/p_r)$ or $w\partial_w(p/p_r).$ 
	\end{remark}
	
	A standard compact-type argument allows us to estimate the integral norm locally in the sense that it is enough to show that for every point $P\in \mathbb{S}_2$ there exist a neighborhood $U=U(P)$ such that 
	\begin{equation}\label{eq:|f|locally}
		\int_{\BB\cap U} \left(||\nabla(f_r)||^2 - |R(f_r)|^2 \right)\sqrt{1- |z|^2 - |w|^2} dA(z,w)<\infty,
	\end{equation} 
	as $r\to 1^-$, where $f_r\in \{p/p_r,z\partial_z(p/p_r),w\partial_w(p/p_r)\}$.
	
	It is convenient to expand in \eqref{eq:|f|locally} the term $||\nabla(f_r)||^2 - |R(f_r)|^2$ as follows:
	\begin{equation}\label{eq:|f|locally1}||\nabla (f_r)||^2 - |R(f_r)|^2 = ||\nabla(f_r)||^2(1-|z|^2- |w|^2) + |\bar z \partial_wf_r - \bar w \partial_zf_r|^2.
	\end{equation}
	
	Of course, the only problematic points $P$ are those that lie in $\mathcal Z(p)\cap \mathbb{S}_2.$
	
	An idea is to expand $p$ in its Weierstrass form. To do it properly we need to rotate $p$. The following result is clear:
	\begin{remark}\label{rem:U}
		Let $\alpha\in (-1,1).$ Let $\mathcal U$ be a unitary matrix. Then $(f\circ$ $ \mathcal U)_r=f_r\circ$ $\mathcal U$ and $|f\circ$ $\mathcal U|_\alpha = |f|_\alpha$.
	\end{remark}
	
	Let us take a point $P\in \mathcal Z(p)\cap \mathbb{S}_2.$ Composing with a unitary matrix we may assume that $P=(0,1).$ Near $P=(0,1)$ we can expand 
	$$p(z,w)= \alpha(z)(1-h_1(z)w)\cdots (1-h_n(z)w), \quad (z,w)\in \DD(\epsilon)\times \DD(1,\epsilon),$$
	where $\alpha$ is a non-vanishing single-valued holomorphic function in $\DD(\epsilon)$ for some $\epsilon>0$. Here $\DD(\epsilon):=\{z\in \CC:|z|<\epsilon\}$ and $\DD(1,\epsilon):=\{z\in \CC:|z-1|<\epsilon\}$.
	
	The functions $h_j$ are branches of algebraic functions. Sometimes it is convenient to look at them as single-valued holomorphic functions that are well defined on dense simply connected subdomains of $\mathbb D(\epsilon)\setminus\{0\}$, called \emph{slight domains}. Note that this does not affect values of area integrals. See \cite{Knese} for more on the properties of these functions.
	
	In particular, we may define the following function:
	$$H(z,w):=(w-h_1(z))\cdots (w-h_n(z)), \quad (z,w)\in \DD(\epsilon)\times \CC.$$
	The function $H$ is a monic polynomial of degree $n$ with respect to $w,$ whose coefficients  are holomorphic functions in $\DD(\epsilon).$ Recall the following consequence of Puiseux's theorem.
	\begin{corollary*}[see \cite{Puiseux}, Corollary, p.171]
		If $H(z,w)$ is a polynomial with respect to $w\in \CC$ which is monic of degree $m$ and whose coefficients are holomorphic in a neighbourhood of zero in $\CC,$ then there exist an integer exponent $k>0$ and holomorphic functions $\Phi_1,...,\Phi_m$ in the disk $\Delta=\{z\in \CC:|z|<\delta\}$ such that
		$$H(z^k,w)=(w-\Phi_1(z))\cdots (w-\Phi_m(z)) \quad \textrm{in}\quad \Delta\times \CC.$$ 
	\end{corollary*}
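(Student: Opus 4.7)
The plan is to prove the corollary by a classical analytic continuation plus monodromy argument. Shrinking the neighborhood on which $H$ is defined, one first reduces to the case where $H(z,w)$ is irreducible as an element of $\mathcal O_0[w]$, where $\mathcal O_0$ denotes the ring of germs of holomorphic functions at $0\in \CC$ (the factorization into irreducibles exists since $\mathcal O_0[w]$ is a UFD, and the statement is multiplicative in $H$). In the irreducible case the discriminant $\Delta_H(z)\in \mathcal O_0$ of $H(z,\cdot)$ with respect to $w$ is not identically zero; shrinking further, we may assume $\Delta_H$ vanishes only at $z=0$. On the punctured disk $\DD(\rho)\setminus\{0\}$ the polynomial $H(z,\cdot)$ then has $m$ distinct roots, and the implicit function theorem provides locally $m$ holomorphic branches $w_1(z),\ldots,w_m(z)$.

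Continuing these branches once around $0$ permutes them according to some monodromy element $\sigma\in S_m$. Let $k$ be the order of $\sigma$. Under the change of variable $z=\zeta^k$, a single loop around $0$ in $\zeta$ pulls back to $k$ loops in $z$, which multiplies the monodromy by $\sigma^k=\mathrm{id}$. Consequently each function $\zeta\mapsto w_j(\zeta^k)$, defined a priori only locally, extends unambiguously to a single-valued holomorphic function on $\DD(\rho^{1/k})\setminus\{0\}$.

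Finally, boundedness of the roots allows removal of the singularity at $\zeta=0$: since the coefficients of the monic polynomial $H$ are bounded near $0$, so are its roots (by the standard estimate that a root of a monic polynomial is bounded by $1$ plus the maximum modulus of its coefficients), hence each $w_j(\zeta^k)$ is bounded near $\zeta=0$ and extends, by Riemann's removable singularity theorem, to a holomorphic function $\Phi_j$ on $\DD(\rho^{1/k})$. The identity $H(\zeta^k,w)=\prod_{j=1}^m(w-\Phi_j(\zeta))$ then holds on the punctured disk—both sides being monic polynomials of degree $m$ in $w$ with the same root set—and hence throughout $\DD(\rho^{1/k})\times\CC$ by continuity.

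The step requiring the most care is the \emph{reduction to the irreducible case} together with the verification that the roots genuinely form a set of well-defined, distinct holomorphic branches on the punctured disk and are permuted by monodromy (rather than escaping to infinity or colliding). Once one has reduced to the irreducible case and established non-degeneracy of the discriminant off $0$, the remaining ingredients—trivialization of monodromy by pulling back through $\zeta\mapsto \zeta^k$ and extension across $0$ via boundedness—are essentially formal.
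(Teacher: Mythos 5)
Your argument is correct, but note that the paper does not prove this statement at all: it is quoted verbatim from \L ojasiewicz's book (the cited Corollary on p.~171), where it is obtained from the Puiseux/Weierstrass preparation machinery developed there. What you supply is the standard self-contained monodromy proof: reduce to irreducible monic factors in $\mathcal O_0[w]$, use nonvanishing of the discriminant off $z=0$ to get $m$ distinct simple roots and hence local holomorphic branches on the punctured disk, kill the monodromy permutation $\sigma$ by pulling back under $z=\zeta^k$ with $\sigma^k=\mathrm{id}$, and extend across $\zeta=0$ via boundedness of roots of a monic polynomial and Riemann's removable singularity theorem. This is a legitimate alternative to citing the reference, and it buys a fully elementary one-variable proof. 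Two routine points deserve one line each if you write it up: in the reduction step the exponent for $H$ must be taken as the least common multiple of the exponents $k_i$ obtained for the distinct irreducible factors (with repeated factors handled simply by repeating their branches, and the factors normalized to be monic, which is possible since the product of their leading coefficients is the unit $1$); and the final identity on the punctured disk should be propagated from a neighbourhood of the base point to all of $\bigl(\Delta\setminus\{0\}\bigr)\times\CC$ by the identity theorem, since a priori the globally continued branches are only known to enumerate all $m$ roots near the base point. Also, in the irreducible case the monodromy is transitive, so one may in fact take $k=m$, though your weaker choice of $k$ as the order of $\sigma$ suffices for the statement.
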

	Take $j=1,...,n.$ By Puiseux's theorem there is a non-negative integer $k=k(j)$ and a holomorphic function $\Phi$ such that $\Phi(z^{1/k})=h_j(z)$ for properly chosen branch of $z^{1/k}$, as $z\rightarrow 0.$ The polynomial does not vanish in the ball and therefore $|h_j(z)|^2\leq (1-|z|^2)^{-1}.$ It thus follows from the assumption that 
	\begin{equation}\label{es}|\Phi(z)|^2\leq \frac{1}{1 - |z|^{2k}},
	\end{equation}
	as $z\to 0$. Whence, 
	\begin{equation}\label{problematic h}
		h_j(z)=1+\gamma z^2+o(z^2), \quad z\rightarrow 0
	\end{equation}
	where $|\gamma|\leq 1/2.$
	
	With these tools in hands we are ready to start the proof of the main result of this section. For $r\in (0,1)$ set
	$$q_r(z,w)=\frac{p(z,w)}{p(rz,rw)}=\frac{\alpha(z)}{\alpha (rz)} \prod_{j=1}^{n}\frac{1-h_j(z)w}{1-rh_j(rz)w}.$$
	
	Let us express partial derivatives of $q_r$ in a useful way. Let $$H_j(z,w):=\frac{1-h_j(z)w}{1-rh_j(rz)w}.$$
	\begin{remark}\label{derq}Simple computations give
		\begin{equation*}\label{eq:the term partial z q}
			\partial_zq_r(z,w)=A_1(z,w)+\sum_{j}B_j(z,w)\partial_zH_j(z,w),
		\end{equation*}
		\begin{equation*}\label{eq:the term partial w q}
			\partial_wq_r(z,w)=\sum_{j}B_j(z,w)\partial_wH_j(z,w),
		\end{equation*}
		\begin{equation}\label{eq:the term partial z z q}
			\begin{split}
				\partial_{zz}q_r(z,w)=&A_2(z,w)\\
				&+\sum_{j}\Big(2\Gamma_j(z,w)\partial_zH_j(z,w)+B_j(z,w)\partial_{zz}H_j(z,w)\Big)\\
				&+\sum_{i\neq j}\Delta_{i,j}(z,w)\partial_zH_j(z,w)\partial_zH_i(z,w),
			\end{split}
		\end{equation}
		\begin{equation}\label{eq:the term partial w z q}
			\begin{split}
				\partial_{wz}q_r(z,w)=&\sum_{j}\Big(\Gamma_j(z,w)\partial_wH_j(z,w)+B_j(z,w)\partial_{wz}H_j(z,w)\Big)\\
				&+\sum_{i\neq j}\Delta_{i,j}(z,w)\partial_zH_j(z,w)\partial_wH_i(z,w),
			\end{split}
		\end{equation}
		\begin{equation}\label{eq:the term partial w w q}
			\begin{split}
				\partial_{ww}q_r(z,w)=&\sum_{j}B_j(z,w)\partial_{ww}H_j(z,w)\\
				&+\sum_{i\neq j}\Delta_{i,j}(z,w)\partial_wH_j(z,w)\partial_wH_i(z,w),
			\end{split}
		\end{equation}
		\begin{equation}\label{eq:the term partial z w q}
			\begin{split}
				\partial_{zw}q_r(z,w)=&\sum_{j}\Big(\Gamma_j(z,w)\partial_wH_j(z,w)+B_j(z,w)\partial_{zw}H_j(z,w)\Big)\\
				&+\sum_{i\neq j}\Delta_{i,j}(z,w)\partial_wH_j(z,w)\partial_zH_i(z,w),
			\end{split}
		\end{equation}
		Functions $A_1,A_2,B_j,\Gamma_j,\Delta_{i,j}$ above are products of the terms $H_j(z,w)$ and $\alpha(z)/\alpha(rz),$ as well as derivatives of $\alpha(z)/\alpha(rz).$
	\end{remark}
	
	To simplify the notation all different positive constants that appear in inequalities below and that are uniform with respect to $r\to 1^{-}$ will be denoted by $C>0.$
	
	\begin{remark}\label{re:a(z)/a(rz) and the other are bounded}
		As pointed out before, $\alpha$ does not vanish in $\DD(\epsilon)$, and hence, $A_1$ and $A_2$ are bounded in a neighbourhood of zero, as are the terms $H_j(z,w)$: 
		$$\left|1- H_j(z,w)\right| \leq C \frac{|h_j (z) - rh_j(rz)|}{|1 - rh_j(rz) w|}\leq C \frac{1-r}{|1 - rh_j(rz) w|}\leq C.$$ Consequently, all the functions $A_1,A_2,B_j,\Gamma_j,\Delta_{i,j}$ are bounded.
	\end{remark}
	
	In the sequel the term $\bar z h_j(rz)-r|w|^2h_j'(rz)$ will appear. To estimate it we need the following elementary lemma, due to Z.~B\l ocki:
	\begin{lemma}[\cite{Blocki}, Lemma 3.1]\label{Blocki}
		Let $\Omega$ be a domain in $\RR^n$ and $\psi\in \mathcal C^{1,1}(\Omega)$ be non-negative. Then $\sqrt{\psi}\in \mathcal C^{0,1}(\Omega)$.
	\end{lemma}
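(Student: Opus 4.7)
The plan is to reduce the statement to a pointwise gradient estimate of the form $|\nabla\psi(x)|^2 \le 2M\psi(x)$, where $M$ is a local Lipschitz constant for $\nabla\psi$, and then deduce the Lipschitz property of $\sqrt{\psi}$ by a Taylor comparison that needs no pointwise differentiation at the zero set of $\psi$. Since the conclusion is local, I would fix a relatively compact convex subdomain $\Omega'\Subset \Omega$ and let $M$ denote the Lipschitz constant of $\nabla\psi$ on $\Omega'$; everything is carried out in $\Omega'$.

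The first main step is the inequality $|\nabla\psi(x)|^2\le 2M\psi(x)$ for every $x\in \Omega'$. The Lipschitz bound on $\nabla\psi$ gives the quantitative Taylor inequality
\[
\psi(x+h)\ge \psi(x)+\nabla\psi(x)\cdot h-\tfrac{M}{2}|h|^2
\]
for all sufficiently small $h$. Assuming $\nabla\psi(x)\neq 0$, substitute $h=-t\,\nabla\psi(x)/|\nabla\psi(x)|$ with $t>0$ small; non-negativity of $\psi$ then forces
\[
0\le \psi(x)-t|\nabla\psi(x)|+\tfrac{M}{2}t^2
\]
for all such $t$. Optimizing in $t$ (or reading off the discriminant of this non-negative quadratic) yields the desired inequality $|\nabla\psi(x)|^2\le 2M\psi(x)$.

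The second step is to convert this estimate into Lipschitz continuity of $\sqrt{\psi}$ without ever dividing by $\sqrt{\psi}$. For $x,y\in \Omega'$, the upper Taylor inequality combined with the bound above gives
\[
\psi(y)\le \psi(x)+|\nabla\psi(x)|\,|y-x|+\tfrac{M}{2}|y-x|^2\le \bigl(\sqrt{\psi(x)}+\sqrt{M/2}\,|y-x|\bigr)^{2}.
\]
Taking square roots and using symmetry in $x$ and $y$ one obtains $|\sqrt{\psi(x)}-\sqrt{\psi(y)}|\le \sqrt{M/2}\,|y-x|$, which is exactly $\sqrt{\psi}\in \mathcal{C}^{0,1}(\Omega')$.

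The main obstacle is precisely the behaviour of $\sqrt{\psi}$ at points where $\psi$ vanishes: the formal gradient $\nabla\psi/(2\sqrt{\psi})$ has a singularity there, so one cannot just bound $|\nabla\sqrt{\psi}|$ pointwise and integrate along a segment through such points. The Taylor-based approach above sidesteps this entirely, because the inequality $\psi(y)\le(\sqrt{\psi(x)}+\sqrt{M/2}\,|y-x|)^{2}$ is valid with no restriction on whether $\psi(x)$ or $\psi(y)$ is zero, and it is the step where the Puiseux-type structure of the problem (non-negative $\mathcal{C}^{1,1}$ function vanishing on an analytic set) is genuinely used.
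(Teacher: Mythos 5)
The paper itself offers no proof of this statement: it is imported verbatim from B\l ocki's paper (Lemma 3.1 there), so there is nothing internal to compare against. Your route is the classical one (essentially Glaeser's lemma, and the same mechanism as in the cited source): first a pointwise bound of the type $|\nabla\psi(x)|^2\leq C\,\psi(x)$ obtained from non-negativity plus the Lipschitz bound on $\nabla\psi$, then the Taylor comparison $\psi(y)\leq\bigl(\sqrt{\psi(x)}+a|y-x|\bigr)^2$, which gives the Lipschitz estimate for $\sqrt{\psi}$ without ever differentiating at the zero set. The second step is written correctly.

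Two points in the first step need attention. One is cosmetic: you quote the lower Taylor inequality, but the deduction actually uses the upper one, $\psi(x+h)\leq\psi(x)+\nabla\psi(x)\cdot h+\tfrac M2|h|^2$, combined with $\psi(x+h)\geq 0$; both bounds follow from the Lipschitz continuity of $\nabla\psi$, so this is only a sign slip. The second is substantive: the discriminant/optimization argument fails as stated, because the quadratic $\psi(x)-t|\nabla\psi(x)|+\tfrac M2 t^2$ is only known to be non-negative for those $t$ for which $x-t\,\nabla\psi(x)/|\nabla\psi(x)|$ stays in the region where the Taylor bound is valid, whereas the optimal $t^{*}=|\nabla\psi(x)|/M$ may exceed the distance from $x$ to $\partial\Omega'$. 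Indeed the clean inequality $|\nabla\psi(x)|^2\leq 2M\psi(x)$, with $M$ only the Lipschitz constant of $\nabla\psi$ on $\Omega'$, is false in general: take $\Omega=(0,\infty)$ and $\psi(x)=x$, so that $M=0$ while $\psi'\equiv 1$; correspondingly $\sqrt{\psi}=\sqrt{x}$ is locally but not uniformly Lipschitz near $0$. Since the lemma claims only $\sqrt{\psi}\in\mathcal C^{0,1}(\Omega)$, i.e.\ locally uniform Lipschitz bounds, the repair is routine: fix $\Omega''$ relatively compact in $\Omega'$, cap $t$ by $\delta=\dist(\Omega'',\partial\Omega')$, and deduce $|\nabla\psi|^2\leq C\psi$ on $\Omega''$ with $C$ depending also on $\delta$ and $\sup_{\Omega'}\psi$; your second step then goes through with $\sqrt{M/2}$ replaced by $\max(\sqrt C/2,\sqrt{M/2})$. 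Finally, the closing remark that the ``Puiseux-type structure'' is used here is misleading: the lemma is a purely real-variable fact about non-negative $\mathcal C^{1,1}$ functions; the Puiseux expansion enters only where the paper applies it, namely in checking that the auxiliary function $\varphi=\psi/|z|^{2(k-1)}$ is $\mathcal C^{1,1}$ and non-negative.
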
 If $\psi$ is smooth and positive, the above lemma says that $|\partial_{x_j}\psi|^2 \leq C |\psi|$, where $C$ is a locally uniform constant.
	
	We have the following preparatory result:
	\begin{lemma}\label{le:bar z h j(rz) - r|w|^2 h' j(rz)}
		There exists $C>0$ such that
		\begin{equation*}|\bar z h_j(z) - |w|^2 h'_j(z)|\leq C |1 - h_j(z)w|^{1/2},\end{equation*}
		for $(z,w)\in\BB$ and $z$ close to $0$.
		
		\begin{proof}
			Let $\Phi$ holomorphic in a neighborhood of $0$ be such that $h_j(z) = \Phi(z^{1/k})$, for some integer $k\geq 1$, and define $\psi(z)=1-(1-|z|^{2k})|\Phi(z)|^2$. Put 		
			$$\varphi(z)= \frac{\psi(z)}{|z|^{2(k-1)}}.$$ 
			Note that $\varphi\geq 0$ by \eqref{es} and one can check that $\varphi$ is $\mathcal C^{1,1}$ smooth. It follows from Lemma~\ref{Blocki} that $|\partial_{z} \varphi| \leq C \sqrt{\varphi}$. Simple calculations lead to $$|z \partial_z \psi(z) - (k-1) \psi(z)|\leq C |z|^k \sqrt{\psi(z)},$$ 
			and therefore
			\begin{equation}\label{eq:parv}
				|z\partial_z \psi(z)|\leq C(|z|^k \sqrt{\psi(z)} + \psi(z)).
			\end{equation} 
			Since $\psi(z)=O(|z|^{2k})$, one has $\psi(z) \leq C |z|^k \sqrt{\psi(z)}$. Thus, \eqref{eq:parv} implies that
			\begin{equation*} 
				|\partial_z \psi(z)| \leq C |z|^{k-1} \sqrt{\psi(z)}.
			\end{equation*} 
			Computing the derivative of $\psi$ we find that
			$$|k z^{k-1} \bar z^k \Phi(z) - (1-|z|^{2k}) \Phi'(z)|^2 \leq C |z|^{k-1} (1 - (1-|z|^{2k}) |\Phi(z)|^2).$$ From this we immediately get that
			$$|\bar z h_j(z) - (1-|z|^2)h'_j(z)|^2\leq C (1-(1-|z|^2)|h_j(z)|^2),$$ as $z\to 0$
			
			On the other hand
			$|1-h_j(z)w|\geq 1 - |h_j(z)||w|\geq C(1-|h_j(z)|^2 |w|^2)\geq C( 1- (1-|z|^2) |h_j(z)|^2),$ so the assertion follows.
			
		\end{proof}
	\end{lemma}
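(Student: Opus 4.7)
The plan is to reduce the estimate to B\l ocki's lemma applied to an auxiliary nonnegative potential attached to $h_j$, and then to translate the resulting derivative bound into the stated form.

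First I would invoke Puiseux to write $h_j(z)=\Phi(z^{1/k})$ with $\Phi$ holomorphic near $0$ and $\Phi(0)=1$. The non-vanishing of $p$ in the ball translates into $(1-|z|^{2k})|\Phi(z)|^2\leq 1$, so
\[
\psi(z):=1-(1-|z|^{2k})|\Phi(z)|^2\geq 0.
\]
The expansion \eqref{problematic h} shows that $\psi$ vanishes to order $2k$ at the origin, and the bound $|\gamma|\leq 1/2$ forces the leading term of $\psi$ to remain nonnegative; these two facts together should suffice to show that $\varphi(z):=\psi(z)/|z|^{2(k-1)}$ extends to a nonnegative $\mathcal{C}^{1,1}$ function near $0$. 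Lemma~\ref{Blocki} then yields $|\partial_z\varphi|\leq C\sqrt{\varphi}$. Expanding $\partial_z\varphi$ in terms of $\psi$ and $\partial_z\psi$, absorbing the lower-order $\psi$-terms (which dominate $|z|^k\sqrt{\psi}$ because $\psi=O(|z|^{2k})$), then writing $\partial_z\psi$ in terms of $\Phi,\Phi'$ and substituting $z^k\mapsto z$, should produce the intermediate inequality
\[
|\bar z\, h_j(z)-(1-|z|^2)h_j'(z)|^2\leq C\bigl(1-(1-|z|^2)|h_j(z)|^2\bigr),\qquad z\to 0.
\]

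To finish, decompose
\[
\bar z\, h_j(z)-|w|^2 h_j'(z)=\bigl(\bar z\, h_j(z)-(1-|z|^2)h_j'(z)\bigr)+\bigl((1-|z|^2)-|w|^2\bigr)h_j'(z)
\]
and bound each piece separately. For the first, combine the intermediate inequality with
\[
|1-h_j(z)w|\geq 1-|h_j(z)||w|\geq C\bigl(1-|h_j(z)|^2|w|^2\bigr)\geq C\bigl(1-(1-|z|^2)|h_j(z)|^2\bigr),
\]
using $|w|^2\leq 1-|z|^2$ on $\BB$ and $|h_j(z)||w|\leq 1$. For the second, use that $h_j'$ is bounded near $0$ together with the elementary consequence $(1-|z|^2-|w|^2)^2\leq C|1-h_j(z)w|$ of the same chain. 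Taking square roots of each piece gives the asserted estimate.

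The main obstacle will be verifying that $\varphi$ is genuinely $\mathcal{C}^{1,1}$ at the origin when $k\geq 2$: its leading behaviour is $|z|^2\bigl(1-2\operatorname{Re}(\gamma(z/|z|)^{2k})\bigr)$, whose regularity near $0$ is not immediate and depends decisively on both the second-order vanishing of $h_j-1$ provided by \eqref{problematic h} and the bound $|\gamma|\leq 1/2$ that keeps the leading profile of $\psi$ nonnegative.
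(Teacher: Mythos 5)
Your proposal follows essentially the same route as the paper's proof: Puiseux's theorem, the auxiliary functions $\psi$ and $\varphi=\psi/|z|^{2(k-1)}$, B\l ocki's lemma to get $|\partial_z\varphi|\leq C\sqrt{\varphi}$, the intermediate bound $|\bar z h_j(z)-(1-|z|^2)h_j'(z)|^2\leq C\bigl(1-(1-|z|^2)|h_j(z)|^2\bigr)$, and the chain $|1-h_j(z)w|\geq 1-|h_j(z)||w|\geq C\bigl(1-(1-|z|^2)|h_j(z)|^2\bigr)$. Your explicit final decomposition isolating $\bigl((1-|z|^2)-|w|^2\bigr)h_j'(z)$ correctly fills in the step the paper compresses into ``so the assertion follows,'' so the argument is sound and essentially identical.
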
 
	
	\begin{remark}\label{rem}
		Lemma~\ref{le:bar z h j(rz) - r|w|^2 h' j(rz)} applied to points $(rz,rw)$ gives that $$|\bar z h_j(rz) - r|w|^2 h'_j(rz)|\leq C |1 - rh_j(rz)w|^{1/2}$$ for $(z,w)\in \mathbb B_2$, $z$ close to $0$ and $r\to 1^-$. This inequality will play a crucial role in the sequel.
	\end{remark}

	\begin{lemma}\label{lemma: estimate for ||nabla fr||^2}
		The term $||\nabla(q_r)||$ is bounded from above by a constant multiple of the term
		\begin{equation*}
			\sum_{j}\frac{1-r}{|1-rh_j(rz)w|^3}.
		\end{equation*} 
		Moreover, $||\nabla(z \partial_z q_r)||$ and $||\nabla(w\partial_wq_r)||$ are estimated (up to a positive constant) by
		\begin{equation*}
			\sum_{j}\frac{1-r}{|1-rh_j(rz)w|^3}\\
			+\sum_{i\neq j}\frac{(1-r)^2}{|1-rh_j(rz)w|^2|1-rh_i(rz)w|^2}.
		\end{equation*}
		
	\end{lemma}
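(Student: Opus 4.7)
The proof combines the formulas of Remark~\ref{derq} with sharp estimates for the derivatives (up to order two) of the auxiliary quotient
\[
H_j(z,w) = \frac{1-h_j(z)w}{1-rh_j(rz)w} = 1 + \frac{\beta_j(z,w)}{v_j(z,w)},
\]
where $v_j := 1 - rh_j(rz)w$ and $\beta_j := v_j - (1-h_j(z)w) = w(rh_j(rz)-h_j(z))$. By Remark~\ref{re:a(z)/a(rz) and the other are bounded}, the coefficient factors $A_1, A_2, B_j, \Gamma_j, \Delta_{i,j}$ are uniformly bounded as $r\to 1^-$; in fact $A_1, A_2, \Gamma_j$, which incorporate derivatives of $\alpha(z)/\alpha(rz)$, are of size $O(1-r)$ since $\alpha(z)/\alpha(rz) \equiv 1$ at $r=1$. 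Thus everything reduces to estimating $|\partial^\kappa H_j|$ for $|\kappa|\le 2$.

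The key analytic input is that $h_j, h_j'$ and $h_j''$ are uniformly bounded in a neighborhood of $z=0$. This is not automatic from the Puiseux representation $h_j(z) = \Phi_j(z^{1/k})$, which \emph{a priori} allows $h_j'$ to blow up like $|z|^{1/k-1}$. However, the estimate \eqref{es}---a consequence of the non-vanishing of $p$ in the ball---forces the Taylor series of $\Phi_j$ at $0$ to have the form $\Phi_j(u) = 1 + O(u^{2k})$, which is precisely the content of \eqref{problematic h} and puts $h_j$ in $\mathcal C^2$ near $0$. From this and the elementary estimate $|rh_j(rz) - h_j(z)| \le C(1-r)$ (and its analogues for $h_j', h_j''$), one gets $|\partial^\kappa \beta_j| \le C(1-r)$ for $|\kappa| \le 2$, while $|\partial v_j|, |\partial^2 v_j| \le C$.

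Differentiating $H_j = 1 + \beta_j/v_j$ via the quotient rule, each first derivative splits as $\partial\beta_j/v_j - \beta_j\,\partial v_j/v_j^2$, both parts of order $O((1-r)/|v_j|^2)$ since $|v_j| \le C$. A second application produces, at worst, a term $\beta_j(\partial v_j)^2/v_j^3$, yielding
\[
|\partial H_j| \le \frac{C(1-r)}{|v_j|^2}, \qquad |\partial^2 H_j| \le \frac{C(1-r)}{|v_j|^3}.
\]
Plugging these bounds into the identities of Remark~\ref{derq} and using $1/|v_j|^2 \le C/|v_j|^3$, the single-index contributions $B_j\,\partial^\kappa H_j$ produce the sum $\sum_j (1-r)/|v_j|^3$, while the cross products $\Delta_{i,j}\,\partial H_i\,\partial H_j$ produce $\sum_{i \neq j}(1-r)^2/(|v_i|^2|v_j|^2)$; all remaining terms (those carrying an $A_1, A_2$ or $\Gamma_j$ factor) are of lower order and absorb into the first sum. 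For $\nabla(z\partial_z q_r)$ and $\nabla(w\partial_w q_r)$, the Leibniz rule combined with the boundedness of $z,w$ near $P=(0,1)$ suffices. The main technical obstacle is the verification that $h_j', h_j''$ are uniformly bounded at $0$, which is where the non-vanishing of $p$ in the ball---through \eqref{es}---enters in a nontrivial way; once this is in hand, the remainder is quotient-rule bookkeeping.
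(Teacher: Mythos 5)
Your overall strategy is the paper's: reduce via Remark~\ref{derq} to bounding first and second derivatives of $H_j$ by $C(1-r)/|1-rh_j(rz)w|^2$ and $C(1-r)/|1-rh_j(rz)w|^3$, the gain of a factor $(1-r)$ coming from radial differences of $h_j$ and its derivatives; your decomposition $H_j=1\pm\beta_j/v_j$ just repackages the paper's explicit numerator $\sigma$, and your observation that $A_1,A_2,\Gamma_j$ are $O(1-r)$ is correct (and indeed needed for the pointwise form of the statement). The one step that is not justified as written is the claim $|\partial^\kappa\beta_j|\le C(1-r)$ for $|\kappa|=2$: the term $\partial_{zz}\beta_j=w\bigl(r^3h_j''(rz)-h_j''(z)\bigr)$ requires $|h_j''(z)-h_j''(rz)|\le C(1-r)$, and this does not follow from the boundedness of $h_j,h_j',h_j''$, which is the only input you establish. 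For Puiseux exponent $k\ge 2$ one has, by \eqref{es} and \eqref{problematic h}, $h_j(z)=1+\gamma z^2+c\,z^{2+1/k}+\cdots$, so $h_j'''(z)$ blows up like $|z|^{1/k-1}$ and $h_j''$ is only H\"older of order $1/k$ near $0$; ``bounded second derivative'' alone gives no Lipschitz-type increment bound, so the phrase ``and its analogues for $h_j',h_j''$'' hides the only genuinely delicate point, which you yourself flag as the main obstacle but then dispatch with the wrong (too weak) statement.

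The gap is reparable in two ways. The paper's route is to never estimate $\partial_{zz}H_j$ by itself but only $z\partial_{zz}H_j$ (this suffices because $\nabla(z\partial_z q_r)$ involves only $\partial_zq_r+z\partial_{zz}q_r$ and $z\partial_{wz}q_r$); keeping the factor $z$ means $h_j''$ enters only through $g(z)=z\,h_j''(z)$, whose derivative $h_j''+zh_j'''$ is bounded near $0$, so $|g(z)-r\,g(rz)|\le C(1-r)$ really is elementary. Alternatively, your raw claim is in fact true and can be salvaged: from the Puiseux expansion $|h_j'''(\zeta)|\le C|\zeta|^{1/k-1}$, and integrating along the segment from $rz$ to $z$ gives $|h_j''(z)-h_j''(rz)|\le C(1-r)|z|^{1/k}\le C(1-r)$; but this argument, which uses more than boundedness of $h_j''$, must be supplied. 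With either fix in place, the remaining quotient-rule bookkeeping in your proposal matches the paper's computation and yields the stated bounds.
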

	\begin{proof}
		Let us start with the first estimate. According to Remark~\ref{derq} it is enough to carry out computation for terms $\partial_z H_j(z,w)$ and $\partial_w H_j(z,w)$. We have
		\begin{equation}\label{eq:H_z}
			\partial_{z}H_j(z,w)=\frac{\sigma(z,w)}{(1-rh_j(rz)w)^2},
		\end{equation}
		where
		\begin{equation}\label{eq:sigma(z,w)}
			\begin{split}
				\sigma(z,w&)=rw(rh_j'(rz)-h_j'(z))
				+wh_j'(z)(r-1)\\
				&+rw^2h_j'(z)(h_j(rz)-h_j(z))+rw^2h_j(z)(h_j'(z)-rh_j'(rz)).
			\end{split}
		\end{equation}
		It is clear from \eqref{problematic h} that \
		\begin{equation*}\label{sigma}|\sigma(z,w)|\leq C(1-r).
		\end{equation*}
		In particular,
		\begin{equation}\label{eq:estimate partial z of 1-hw/1-rhrw}
			|\partial_{z}H_j(z,w)|\leq C(1-r)|1-rh_j(rz)w|^{-3} .
		\end{equation} 
		Similarly,
		\begin{equation*}
			\partial_{w}H_j(z,w)=\frac{rh_j(rz)-h_j(z)}{(1-rh_j(rz)w)^2}.
		\end{equation*} and, by \eqref{problematic h},
		\begin{equation}\label{eq:parw}
			|\partial_wH_j(z,w)|\leq C(1-r)|1-rh_j(rz)w|^{-3} .
		\end{equation} 
		
		To prove the second part of the assertion we need to estimate  $z \partial_{zz} q_r$, $\partial_{wz}q_r,$ $\partial_{zw} q_r$, and $\partial_{ww} q_r$. Let us start with $z\partial_{zz}q_r.$ According to \eqref{eq:the term partial z z q} we have
		\begin{multline*}
			|z\partial_{zz}q_r(z,w)|\leq C\sum_{j}\Big(|\partial_zH_j(z,w)|+|z\partial_{zz}H_j(z,w)|\Big)\\
			+C\sum_{i\neq j}|\partial_{z}H_j(z,w)||\partial_z H_i(z,w)|.
		\end{multline*}
		Thus, what we need to do is to estimate $z\partial_{zz}H_j(z,w).$ From \eqref{eq:H_z} we get
		\begin{equation}\label{eq:zH_zz}
			z\partial_{zz}H_j(z,w)=\frac{z\partial_z\sigma(z,w)}{(1-rh_j(rz)w)^2}+\frac{2r^2wzh_j'(rz)\sigma(z,w)}{(1-rh_j(rz)w)^3},
		\end{equation}
		where
		\begin{equation}\label{eq:z partial z sigma}
			\begin{split}
				&z\partial_z\sigma(z,w)=rw(r^2zh_j''(rz)-zh_j''(z))+zwh_j''(z)(r-1)\\
				&+rzw^2h_j''(z)(h_j(rz)-h_j(z))+rzw^2h_j'(z)(rh_j'(rz)-h_j'(z))\\
				&+rzw^2h_j'(z)(h_j'(z)-rh_j'(rz))+rw^2h_j(z)(zh_j''(z)-r^2zh_j''(rz)).
			\end{split}
		\end{equation}
		It is then clear that 
		\begin{multline*}
			|z\partial_{zz}H_j(z,w)|\leq C\frac{1-r}{|1-rh_j(rz)w|^2}+C\frac{1-r}{|1-rh_j(rz)w|^{3}}\\
			\leq C \frac{1-r}{|1-rh_j(rz)w|^{3}}.
		\end{multline*}
		Thus,
		\begin{align*}
			|z\partial_{zz}q_r(z,w)|\leq& C\sum_{j}\frac{1-r}{|1-rh_j(rz)w|^{3}}\\
			&+C\sum_{i\neq j}\frac{(1-r)^2}{|1-rh_j(rz)w|^2|1-rh_i(rz)w|^{2}}
		\end{align*}
		
		Let us look at $z\partial_{wz}q_r$. According to \eqref{eq:the term partial w z q} we have
		\begin{multline*}
			|z\partial_{wz}q_r(z,w)|\leq C\sum_{j}\Big(|\partial_wH_j(z,w)|+|\partial_{wz}H_j(z,w)|\Big)\\
			+C\sum_{i\neq j}|\partial_zH_j(z,w)||\partial_w H_i(z,w)|.
		\end{multline*}
		The only term that has not been estimated yet is $\partial_{wz}H_j(z,w).$ From \eqref{eq:H_z} we get
		\begin{equation}\label{eq:partial wz H}
			\partial_{wz}H_j(z,w)=\frac{\partial_w\sigma(z,w)}{(1-rh_j(rz)w)^2}+\frac{2rh_j(rz)\sigma(z,w)}{(1-rh_j(rz)w)^3}.
		\end{equation}
		Whence,
		\begin{equation*}
			|\partial_{wz}H_j(z,w)|\leq C(1-r)|1-rh_j(rz)w|^{-3}.
		\end{equation*}
		
		Finally, one can estimates $\partial_{zw}q_r$ and $\partial_{ww}q_r$ in the same way as presented above.
		
	\end{proof}
	
	The next lemma requires more subtle estimations.
	\begin{lemma}\label{lemma: |estimate for bar z fr-bar w fr|^2}
		Let $f_r$ denote one of the functions $q_r,z\partial_zq_r,w\partial_wq_r.$ Then $|\bar z \partial_wf_r - \bar w \partial_zf_r|$ is bounded from above by a constant multiple of
		$$	
		\sum_{j}\frac{1-r}{|1-rh_j(rz)w|^{5/2}} +\sum_{i\neq j}\frac{(1-r)^2}{|1-rh_j(rz)w|^2|1-rh_i(rz)w|^{3/2}}.
		$$
		\begin{proof}
			Let us take $f_r=q_r.$ Since	$|\bar z \partial_wq_r - \bar w \partial_zq_r|$ can be estimated by $||\nabla(q_r)||,$ the assertion for this term follows from Lemma~\ref{lemma: estimate for ||nabla fr||^2}.
			
			Consider the case $f_r=z\partial_zq_r.$ Since
			\begin{equation*}|\bar z \partial_w(z\partial_zq_r) - \bar w \partial_z(z\partial_zq_r)|\leq C ||\nabla(q_r)||+C\left||z|^2\partial_{wz}q_r-\bar wz\partial_{zz}q_r\right|,
			\end{equation*}
			it suffices to estimate $\left||z|^2\partial_{wz}q_r-\bar wz\partial_{zz}q_r\right|.$ According to \eqref{eq:the term partial z z q} and \eqref{eq:the term partial w z q} we get
			\begin{multline*}
				\left||z|^2\partial_{wz}q_r-\bar wz\partial_{zz}q_r\right|\leq C\sum_{j}\Big(|\partial_zH_j(z,w)|+|\partial_wH_j(z,w)|\Big)\\
				+C\sum_{j}\left||z|^2\partial_{wz}H_j(z,w)-\bar w z\partial_{zz}H_j(z,w)\right|\\
				+C\sum_{i\neq j}|\partial_zH_j(z,w)|\left||z|^2\partial_{w}H_i(z,w)-\bar w z\partial_{z}H_i(z,w)\right|.
			\end{multline*}
			Note that the terms $\partial_zH_j(z,w),$ $\partial_wH_j(z,w)$ have already been estimated in \eqref{eq:estimate partial z of 1-hw/1-rhrw}, \eqref{eq:parw}. 
			
			By \eqref{eq:zH_zz} and \eqref{eq:partial wz H}, to deal with $|z|^2\partial_{wz}H_j(z,w)-\bar w z\partial_{zz}H_j(z,w)$ it suffices to estimate
			$$\frac{|z|^2\partial_w\sigma(z,w)-\bar w z\partial_z\sigma(z,w)}{(1-rh_j(rz)w)^2} \text{ and } \frac{2r\sigma(z,w) z(\bar z h_j(rz)-r|w|^2h_j'(rz))}{(1-rh_j(rz)w)^3}.$$
			By \eqref{eq:sigma(z,w)}, \eqref{eq:z partial z sigma} and \eqref{problematic h} one has $|\sigma(z,w)|,$ $|\partial_z\sigma(z,w)|,$ $|\partial_w\sigma(z,w)|\leq C(1-r).$ Thus, $\left||z|^2\partial_{wz}H_j(z,w)-\bar w z\partial_{zz}H_j(z,w)\right|$ is bounded by a constant times
			$$\frac{1-r}{|1-rh_j(rz)w|^2}+\frac{(1-r)|\bar z h_j(rz)-r|w|^2h_j'(rz)|}{|1-rh_j(rz)w|^3}.$$
			Applying Remark~\ref{rem} to the second term above we get
			\begin{multline*}
				\left||z|^2\partial_{wz}H_j(z,w)-\bar w z\partial_{zz}H_j(z,w)\right|\leq \frac{C(1-r)}{|1-rh_j(rz)w|^2}\\
				+\frac{C(1-r)}{|1-rh_j(rz)w|^{5/2}}.
			\end{multline*}
			
			Let us focus on $|z|^2\partial_{w}H_i(z,w)-\bar w z\partial_{z}H_i(z,w).$ Carrying out some computations we get that 
			\begin{multline*}
				|z|^2\partial_{w}H_i(z,w)-\bar w z\partial_{z}H_i(z,w)=\frac{rz|w|^2(h_i'(z)-rh_i'(rz))(1-wh_i(z))}{(1-rh_i(rz)w)^2}\\
				+\frac{(1-r)z(|w|^2h_i'(z)-\bar zh_i(z))}{(1-rh_i(rz)w)^2}
				+\frac{r|z|^2(h_i(rz)-h_i(z))(1-wh_i(z))}{(1-rh_i(rz)w)^2}\\
				+\frac{rzw(h_i(rz)-h_i(z))(\bar zh_i(z)-|w|^2h_i'(z))}{(1-rh_i(rz)w)^2}.
			\end{multline*}
			Note that $1-wh_i(z)=1-rh_i(rz)w+w(rh_i(rz)-h_i(z)).$ It follows from Lemma~\ref{le:bar z h j(rz) - r|w|^2 h' j(rz)} and \eqref{problematic h} that
			\begin{equation*}\label{eq:|z|^2 partial w H i(z,w)-bar w z partial z H i(z,w)}
				\left||z|^2\partial_{w}H_i(z,w)-\bar w z\partial_{z}H_i(z,w)\right|\leq C\frac{1-r}{|1-rh_i(rz)w|^{3/2}}.	
			\end{equation*}

			Finally, take $f_r=w\partial_wq_r.$ Then $|\bar z \partial_w(w\partial_wq_r) - \bar w \partial_z(w\partial_wq_r)|$ is bounded by a constant times 
			$||\nabla(q_r)||+\left||w|^2\partial_{zw}q_r-\bar zw\partial_{ww}q_r\right|.$ According to \eqref{eq:the term partial w w q}  and \eqref{eq:the term partial z w q} we get
			\begin{multline*}
				\left||w|^2\partial_{zw}q_r-\bar zw\partial_{ww}q_r \right|\leq C\sum_{j}|\partial_wH_j(z,w)|\\
				+C\sum_{j}\left||w|^2\partial_{zw}H_j(z,w)-\bar z w\partial_{ww}H_j(z,w)\right|\\
				+C\sum_{i\neq j}|\partial_wH_j(z,w)|\left||w|^2\partial_{z}H_i(z,w)-\bar z w\partial_{w}H_i(z,w)\right|.
			\end{multline*}
			Let us expand
			\begin{multline*}
				|w|^2\partial_{z}H_i(z,w)-\bar z w\partial_{w}H_i(z,w)=\\ \frac{rw|w|^2(rh_i'(rz)-h_i'(z))(1-wh_i(z))}{(1-rh_i(rz)w)^2}
				+\frac{(1-r)w(\bar zh_i(rz)-|w|^2h_i'(z))}{(1-rh_i(rz)w)^2}+\\
				\frac{w^2(h_i(rz)-h_i(z))(r|w|^2h_i'(z)-\bar zh_i(z))}{(1-rh_i(rz)w)^2}+\\
				\frac{\bar z w(h_i(rz)-h_i(z))(wh_i(z)-1)}{(1-rh_i(rz)w)^2}.
			\end{multline*}
			
			In particular, the following estimation holds:
			\begin{equation*}
				\left||w|^2\partial_{z}H_i(z,w)-\bar z w\partial_{w}H_i(z,w)\right|\leq C\frac{1-r}{|1-rh_i(rz)w|^{3/2}}. 
			\end{equation*}
			Similarly, one can show that
			\begin{equation*}
				\Big||w|^2\partial_{zw}H_j(z,w)-\bar z w\partial_{ww}H_j(z,w)\Big|\leq C\frac{1-r}{|1-rh_j(rz)w|^{5/2}}.
			\end{equation*}
		\end{proof} 
	\end{lemma}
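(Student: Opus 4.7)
The plan is to split on $f_r\in\{q_r,\,z\partial_z q_r,\,w\partial_w q_r\}$ and, in each case, express $\bar z\partial_w f_r-\bar w\partial_z f_r$ in terms of the Weierstrass building blocks $H_j$ via Remark~\ref{derq}. The key point is that Lemma~\ref{lemma: estimate for ||nabla fr||^2} already provides the claimed bound but with $|1-rh_j(rz)w|^{-3}$ in place of $|1-rh_j(rz)w|^{-5/2}$, so what we really need is a mechanism to gain the missing half-power. That mechanism is Remark~\ref{rem}: wherever the combination $\bar z h_j(rz)-r|w|^2 h_j'(rz)$ appears as a factor, a cube in the denominator can be traded for a $5/2$.

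For $f_r=q_r$ there is nothing to prove, since $|\bar z\partial_w q_r-\bar w\partial_z q_r|\leq(|z|+|w|)\|\nabla q_r\|$ and Lemma~\ref{lemma: estimate for ||nabla fr||^2} gives a stronger estimate. For $f_r=z\partial_z q_r$, differentiating yields $\bar z\partial_w(z\partial_z q_r)-\bar w\partial_z(z\partial_z q_r)=|z|^2\partial_{wz}q_r-\bar w z\partial_{zz}q_r-\bar w\partial_z q_r$, where the last term is absorbed into $\|\nabla q_r\|$. Using \eqref{eq:the term partial z z q}--\eqref{eq:the term partial w z q} one groups summands so that each $H_j$-contribution appears either as the twisted combination $|z|^2\partial_{wz}H_j-\bar w z\partial_{zz}H_j$, or as a mixed product of $\partial_z H_j$ with $|z|^2\partial_w H_i-\bar w z\partial_z H_i$.

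The decisive computation is the first of these. Inserting \eqref{eq:H_z} and \eqref{eq:partial wz H}, the $\sigma$-derivative contributions are harmless and produce $(1-r)/|1-rh_j(rz)w|^2$; the only dangerous term is $2r\sigma(z,w)z(\bar z h_j(rz)-r|w|^2 h_j'(rz))/(1-rh_j(rz)w)^3$, on which Remark~\ref{rem} converts the bracketed factor into a $|1-rh_j(rz)w|^{1/2}$ and upgrades the exponent from $3$ to $5/2$. This yields the first sum in the claim. For the mixed term $|z|^2\partial_w H_i-\bar w z\partial_z H_i$, I would first rewrite $1-wh_i(z)=(1-rh_i(rz)w)+w(rh_i(rz)-h_i(z))$ to isolate the singular factor, then extract $\bar z h_i(z)-|w|^2 h_i'(z)$ and apply Lemma~\ref{le:bar z h j(rz) - r|w|^2 h' j(rz)}; the remainder is bounded by $(1-r)/|1-rh_i(rz)w|^{3/2}$ which, combined with \eqref{eq:estimate partial z of 1-hw/1-rhrw}, produces the second sum. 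The case $f_r=w\partial_w q_r$ is entirely symmetric: one starts from $\bar z\partial_w(w\partial_w q_r)-\bar w\partial_z(w\partial_w q_r)=\bar z\partial_w q_r+\bar z w\partial_{ww}q_r-|w|^2\partial_{zw}q_r$ and repeats the bookkeeping with the roles of $z$ and $w$ exchanged.

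The main obstacle is purely algebraic: the expansions in Remark~\ref{derq} are unwieldy, and one must identify precisely the right grouping so that $\bar z h_j(rz)-r|w|^2 h_j'(rz)$ emerges as a common factor. Without that recognition one never exceeds the $|1-rh_j(rz)w|^{-3}$ bound of Lemma~\ref{lemma: estimate for ||nabla fr||^2}, which, together with the $\sqrt{1-|z|^2-|w|^2}$ weight and the identity \eqref{eq:|f|locally1}, would be just barely insufficient for the $D_{-1/2}$-integrability argument outlined in Remark~\ref{rem:|fr|-1/2}. The $5/2$ exponent is exactly what makes the subsequent integration go through.
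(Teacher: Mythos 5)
Your proposal is correct and follows essentially the same route as the paper: the same reduction of $z\partial_z q_r$ and $w\partial_w q_r$ to the twisted combinations $|z|^2\partial_{wz}q_r-\bar w z\partial_{zz}q_r$ and $|w|^2\partial_{zw}q_r-\bar z w\partial_{ww}q_r$ (with the first-order leftovers absorbed into $||\nabla(q_r)||$), the same grouping via Remark~\ref{derq}, and the same half-power gain obtained by applying Remark~\ref{rem} to the factor $\bar z h_j(rz)-r|w|^2h_j'(rz)$ in the diagonal terms and Lemma~\ref{le:bar z h j(rz) - r|w|^2 h' j(rz)} together with the rewriting $1-wh_i(z)=(1-rh_i(rz)w)+w(rh_i(rz)-h_i(z))$ in the mixed terms. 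This is exactly the paper's argument, including its treatment of the case $f_r=q_r$ via Lemma~\ref{lemma: estimate for ||nabla fr||^2}.
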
 
	
	Our goal is to prove \eqref{eq:|f|locally}. Thanks to Lemma~\ref{lemma: estimate for ||nabla fr||^2}, Lemma~\ref{lemma: |estimate for bar z fr-bar w fr|^2} and \eqref{eq:|f|locally1} it is suffices to show that 
	\begin{equation}\label{int1}\int_{\BB\cap U_j}\frac{(1-r)^2 (1- |z|^2 - |w|^2)^{\alpha}}{|1 - rh_j(rz)w |^{\beta}} dA(z,w)<\infty, \text{ as }r\to 1^-,\end{equation}where $(\alpha,\beta)=(3/2,6),$ or $(\alpha, \beta)=(1/2,5)$, and
	\begin{equation*}\int_{\BB\cap U_{i,j}}\frac{(1-r)^4 (1- |z|^2 - |w|^2)^{\alpha'}}{|1 - rh_j(rz)w |^{\beta'}|1 - rh_i(rz)w|^{\gamma'}} dA(z,w)<\infty, \text{ as } r\rightarrow 1^{-},
	\end{equation*}
	where $(\alpha',\beta',\gamma')=(3/2,4,4),$ or $(\alpha', \beta', \gamma')=(1/2,4,3).$ Applying H\"{o}lder's inequality to the second integral above we see that it is enough to prove \eqref{int1}.
	
	The following integral estimate of Forelli and Rudin is crucial for our computations:
	\begin{remark}\label{Forelli rudin}[see \cite{Forelli Rudin}, Theorem 1.7]
		Let $a\in (-1,\infty)$ and $b \in (0,\infty).$ Then
		\begin{equation}
			\int_{\DD}\frac{(1-|w|)^{a}}{|1-\overline{z} w|^{2+a+b}}dA(w)\asymp (1-|z|^2)^{-b},
		\end{equation}
		as $|z|\rightarrow 1^{-}.$
	\end{remark}
	
	\begin{lemma}\label{lem:ab} Let $\alpha\in (-1,\infty)$ and $\beta-2-\alpha \in (0,\infty).$ Set $U_j=\Omega_j\times \DD(1,\epsilon),$ where $\Omega_j$ is the slight domain on which $h_j$ is defined. Then
		\begin{align*}
			\int_{\BB\cap U_j}\frac{(1- |z|^2 - |w|^2)^\alpha}{|1 - rh_j(rz)w |^\beta}& dA(z,w)\\
			&\leq C\int_{\Omega_j} \frac{1}{ (1- r^2 |h_j(rz)|^2 (1-|z|^2))^b} dA(z),
		\end{align*}
		where $b=\beta-\alpha-2.$ 
		\begin{proof}
			Let us estimate
			\begin{align*}
				&\int_{\BB\cap U_j}\frac{(1- |z|^2 - |w|^2)^\alpha}{|1 - rh_j(rz)w |^\beta} dA(z,w)\\
				&\leq  \int_{\Omega_j}(1-|z|^2)^{\alpha} \int_{\{|w|<\sqrt{1-|z|^2}\}} \frac{\left(1 - |(1-|z|^2)^{-1/2}w|^2\right)^\alpha}{ |1 - rh_j(rz) w|^\beta} dA(w)dA(z)\\
				&\leq \int_{\Omega_j}(1-|z|^2)^{\alpha+2} \int_{\DD} \frac{(1 - |w|^2)^\alpha}{ |1 - rh_j(rz) \sqrt{1-|z|^2}w|^\beta} dA(w)dA(z)\\
				&\leq
				C \int_{\Omega_j} \frac{1}{ (1- r^2 |h_j(rz)|^2 (1-|z|^2))^b} dA(z).
			\end{align*}
			In the computations carried out above the change-of-variables $w\mapsto w(1-|z|^2)^{-\frac{1}{2}}$ and Remark~\ref{Forelli rudin} have been used.
		\end{proof}
	\end{lemma}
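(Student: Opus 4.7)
The plan is a straightforward Fubini argument combined with the Forelli--Rudin integral estimate recalled in Remark~\ref{Forelli rudin}. I would first write the integral as iterated, integrating in $w$ over the slice $\{w\in\DD(1,\epsilon):|w|^2<1-|z|^2\}$ imposed by the ball constraint, and then in $z$ over $\Omega_j$.

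For the inner integral, the idea is to normalize the $w$-slice to the unit disk so that Remark~\ref{Forelli rudin} applies directly. Factor
$$(1-|z|^2-|w|^2)^\alpha = (1-|z|^2)^\alpha \bigl(1 - |w|^2/(1-|z|^2)\bigr)^\alpha,$$
extend the $w$-range to the full disk $\{|w|<\sqrt{1-|z|^2}\}$ (the integrand is non-negative), and substitute $u=(1-|z|^2)^{-1/2}w$, whose Jacobian contributes an extra factor of $(1-|z|^2)$. The inner integral becomes
$$\int_\DD \frac{(1-|u|^2)^\alpha}{|1-cu|^\beta}\,dA(u), \qquad c:=r h_j(rz)(1-|z|^2)^{1/2},$$
and Remark~\ref{Forelli rudin}, applied with $a=\alpha$ (so that $2+a+b=\beta$ forces $b=\beta-\alpha-2$), bounds this by a constant times $(1-|c|^2)^{-b}=(1-r^2|h_j(rz)|^2(1-|z|^2))^{-b}$.

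Multiplying through, the leftover $z$-factor is $(1-|z|^2)^{\alpha+1}$, which is at most $1$ because $\alpha>-1$, and so is absorbed into the constant $C$. This yields exactly the claimed bound. The only points requiring care are bookkeeping: the exponent arithmetic $2+a+b=\beta$ is matched precisely by the hypotheses ($a=\alpha>-1$ for integrability of the weight, and $b=\beta-\alpha-2>0$ as the exponent produced by the estimate); and one must check $|c|\le 1$ so that Remark~\ref{Forelli rudin} is genuinely applicable, which follows from the pointwise estimate $|h_j(z)|^2(1-|z|^2)\le 1$ recorded in~\eqref{es}. Beyond these verifications I anticipate no real obstacle; the lemma is a reduction of a two-variable weighted integral to a one-variable one via Forelli--Rudin.
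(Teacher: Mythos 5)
Your proposal is correct and follows essentially the same route as the paper: Fubini, factoring out $(1-|z|^2)^\alpha$, the substitution $w\mapsto (1-|z|^2)^{1/2}w$, and then Remark~\ref{Forelli rudin} with $a=\alpha$, $b=\beta-\alpha-2$, absorbing the leftover positive power of $(1-|z|^2)$ into the constant. Your checks that $|rh_j(rz)|\sqrt{1-|z|^2}<1$ (via $|h_j|^2\leq(1-|\cdot|^2)^{-1}$) and that the exponent arithmetic matches are exactly the points the paper uses implicitly.
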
 	
	
	Set $h_j=h$ and $\Omega_j=\Omega.$ According to Lemma~\ref{lem:ab}, to get \eqref{int1}, it suffices to show that the integral
	$$\int_{\Omega}\frac{(1-r)^2}{(1-r^2(1-|z|^2)|h(rz)|^2)^{5/2}}dA(z)$$
	is bounded as $r\rightarrow 1^{-}.$ 	
	
	We showed in \eqref{problematic h} that $h(z)=1+\gamma z^2+o(z^2),$ where $|\gamma|\leq 1/2.$ If $|\gamma|<1/2$, then $|h(z)|^2\leq 1+|z|^2,$ and we end up with the integral
	$$\int_{0}^{\epsilon}\frac{(1-r)^2x}{(1-r^2+r^2x^4)^{5/2}}dx,$$
	which is bounded as $r\rightarrow 1^{-}.$
	
	The case $|\gamma|=1/2$ needs some more attention. Without loss of generality we may then assume that $\gamma=1/2,$ and hence it suffices to consider
	$$ \int_{\Omega}\frac{(1-r)^{2}}{(1-r^2+r^2(|z|^2-r^2Re(z^2)+o(z^2)))^{5/2}}dA(z),$$
	as $r\rightarrow 1^-.$ Note that $z\mapsto o(z^2)$ is a real-valued function, smooth on $\Omega$. Setting $z=x+iy,$ it is enough to estimate the integral
	\begin{equation*}
		\iint_{\Omega} \frac{(1-r)^{2}}{(1-r^2+r^2((1-r^2)x^2+(1+r^2)y^2+o(z^2)))^{5/2}}dxdy.
	\end{equation*}
	Fix $r\in (0,1)$ and define $u$ in $\DD$ by
	$$u(x,y)=\frac{(1-r)^2}{(1-r^2+r^2((1-r^2)x^2+(1+r^2)y^2))^{5/2}}\geq 0.$$
	Next, define $G\in C^\infty(\Omega)$ by
	$$G(x,y)=\Big(x, y\sqrt{1+\frac{o(z^2)}{1+r^2}}\Big).$$
	Note that the Jacobian of $G$ at $(x,y)$ is close to $1$ as $(x,y)$ is close to zero. This allows us to make a proper change of variables. Therefore it is enough to estimate $\iint_{\DD}u(x,y)dxdy$ which, in turn, boils down to
	$$\int_{0}^{1}\int_{0}^{2\pi}\frac{(1-r)^{2}t}{\Big(1-r^2+r^2t^2(1-r^2\cos(\theta))\Big)^{5/2}}dtd\theta.$$
	It is elementary to see that the last integral is bounded as $r\rightarrow 1^-$, so the desired result follows.
	
	\section{Non-cyclicity for infinitely many boundary zeros}
	All that is left to prove that any polynomial non-vanishing in the ball that vanishes along a curve in the sphere fails to be cyclic in certain $D_{\alpha}(\BB).$ We have the following:
	
	\begin{theorem}\label{non-cyclic}
		Let $p\in \CC[z,w]$ be a polynomial non-vanishing in the unit ball with infinitely many zeros in $\mathbb{S}_2.$ Then $p$ is not cyclic in $D_\alpha(\BB)$ whenever $\alpha>3/2.$
	\end{theorem}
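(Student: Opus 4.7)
The plan is to invoke the capacity-necessity argument from \cite{Sola}. Suppose for contradiction that $p$ is cyclic in $D_\alpha(\BB)$ for some $\alpha>3/2$; then there exist polynomials $p_n$ with $\|1-p\,p_n\|_\alpha\to 0$. Because $p$ has infinitely many boundary zeros, Lemma~\ref{zero} produces a non-constant analytic arc $\gamma$ contained in $\mathcal Z(p)\cap \mathbb{S}_2$.

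Attach to $D_\alpha(\BB)$ its natural outer capacity $c_\alpha$ on $\mathbb{S}_2$, as developed in \cite{Sola}. The Beurling-type theorem attached to $c_\alpha$ asserts that, along a subsequence, the non-tangential boundary values of $p\,p_n$ converge to $1$ at every point of $\mathbb{S}_2$ off an exceptional set of $c_\alpha$-capacity zero. Since $p$ extends smoothly past $\mathbb{S}_2$ and vanishes identically on $\gamma$, the same boundary values are equal to $0$ on $\gamma$. The two conclusions are compatible only if $c_\alpha(\gamma)=0$.

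The decisive remaining step is the matching lower bound: I must show that every non-constant analytic arc $\gamma\subset \mathbb{S}_2$ has strictly positive $c_\alpha$-capacity whenever $\alpha>3/2$. I would produce an explicit probability measure $\mu$ concentrated on a short sub-arc of $\gamma$ (pushing forward a smooth arclength density along the parametrization, after the Puiseux-type description of $\gamma$ near its endpoints that was used in Section~4) and then bound its $\alpha$-energy from above. This energy integral reduces to a double parametric integral involving $|1-\langle\gamma(s),\gamma(t)\rangle|$; the Forelli-Rudin type estimates from Section~4, combined with the order of vanishing of this quantity on the diagonal $s=t$, give finiteness precisely at the threshold $\alpha=3/2$, reflecting the fact that $\gamma$ is one-real-dimensional inside the three-real-dimensional sphere $\mathbb{S}_2$.

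The main obstacle is this last energy computation. One must (i) identify the correct capacity kernel associated to $D_\alpha(\BB)$ at the boundary, (ii) handle possible algebraic-branching endpoints of $\gamma$ via the Puiseux parametrization already developed in Section~4, and (iii) verify that the resulting parametric integral is finite for every $\alpha>3/2$, yielding $c_\alpha(\gamma)>0$ and thereby contradicting the conclusion $c_\alpha(\gamma)=0$ obtained in the previous paragraph. Everything else is comparatively soft: the Beurling-type boundary-trace theorem is standard for reproducing-kernel Hilbert spaces of this form, and Lemma~\ref{zero} has already supplied the arc $\gamma$.
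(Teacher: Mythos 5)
Your overall architecture is the same as the paper's: Lemma~\ref{zero} supplies the analytic arc $\gamma\subset\mathcal Z(p)\cap\mathbb{S}_2$, and the ``Beurling-type'' quasi-everywhere convergence you invoke is exactly the content of Theorem~\ref{sola} (Sola's Remark 4.5), which already comes with the relevant anisotropic kernel $K_\alpha(|1-\langle\zeta,\eta\rangle|)$, so your item (i) is not actually open. The theorem therefore reduces, for you as for the paper, to proving $\capp_\alpha(\gamma)>0$ for $\alpha>3/2$. Here you genuinely diverge: the paper does not estimate the energy of an arclength measure directly, but transports a finite-energy measure from the model curve $\mathcal Z(1-2zw)\cap\mathbb{S}_2$ (whose positive capacity is quoted from \cite{Sola}) via $T=\omega\circ\gamma^{-1}$, using the two-sided comparison $d(\gamma(\theta),\gamma(\theta'))\asymp|\theta-\theta'|$, which it derives from \eqref{problematic h}, i.e.\ from the complex-tangency of zero curves of polynomials non-vanishing in $\BB$. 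Your direct route is viable, and in a sense more elementary, because it only requires a one-sided bound.

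However, as written the proposal stops exactly where the real work lies. The decisive point --- the behaviour of $|1-\langle\gamma(s),\gamma(t)\rangle|$ near the diagonal --- is listed as an ``obstacle,'' and the tools you name for it (Forelli--Rudin estimates, Puiseux expansions at the endpoints) are not the ones that close it: Forelli--Rudin concerns integrals over the ball, not energies of measures on curves. What you actually need is only the lower bound $|1-\langle\gamma(s),\gamma(t)\rangle|\geq c|s-t|^2$. This follows from the identity $\|\zeta-\eta\|^2=2\,\mathrm{Re}(1-\langle\zeta,\eta\rangle)\leq 2|1-\langle\zeta,\eta\rangle|$, valid on $\mathbb{S}_2$, once you restrict to a closed subarc on which $\gamma'\neq 0$ and $\gamma$ is injective (possible since $\gamma$ is non-constant and analytic), so that $\|\gamma(s)-\gamma(t)\|\geq c|s-t|$ there; then for the normalized parameter measure $\mu$ one gets $I_\alpha[\mu]\leq C\iint|s-t|^{2(\alpha-2)}\,ds\,dt<\infty$ for $3/2<\alpha<2$, the logarithmic kernel handles $\alpha=2$, and $\alpha>2$ follows by monotonicity of the norms. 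Note also that your heuristic that finiteness holds ``precisely'' at the threshold $3/2$ for every analytic arc is not correct: a non-complex-tangential arc such as $\theta\mapsto(0,e^{i\theta})$ satisfies $|1-\langle\gamma(s),\gamma(t)\rangle|\asymp|s-t|$ and carries positive capacity already for $\alpha>1$; sharpness at $3/2$ is a feature of the complex-tangential arcs forced by \eqref{problematic h}, which the paper exploits in the opposite direction in Theorem~\ref{main}. That inaccuracy is harmless for your purpose, but the missing diagonal lower bound is the heart of the matter and must be stated and proved; without it the proposal names the key step rather than carrying it out.
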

	
	This result is a consequence of capacitary conditions and the nature of the boundary zeros of a polynomial non-vanishing in the ball. We consider Riesz $\alpha$-capacity for a fixed $\alpha\in(0,2]$ with respect to the \emph{anisotropic distance} in $\mathbb S_2$ given by 
	$$d(\zeta,\eta)=|1-\langle \zeta,\eta \rangle|^{1/2}$$
	and the positive kernel $K_\alpha:(0,\infty)\rightarrow [0,\infty)$ given by
	
	$$K_\alpha(t)=
	\begin{dcases}
		t^{\alpha-2}, & \alpha\in(0,2) \\
		\log(e/t), & \alpha=2
	\end{dcases}
	.$$
	
	If $\mu$ is a Borel probability measure supported on some Borel set $E\subset \mathbb S_2,$ then the \emph{Riesz} $\alpha$-\emph{energy} of $\mu$ is given by
	$$I_\alpha[\mu]=\iint_{\mathbb S_2}K_\alpha(|1-\langle \zeta,\eta \rangle|)d\mu(\zeta)d\mu(\eta)$$
	and the \emph{Riesz} $\alpha$-\emph{capacity} of $E$ by
	$$\text{cap}_\alpha(E)=1/\inf\{I_\alpha[\mu]:\mu\in \mathcal{P}(E)\},$$
	where $\mathcal{P}(E)$ is the set of Borel probability measures supported on $E.$
	
	The following theorem is crucial to identify non-cyclicity in Dirichlet-type spaces:
	
	\begin{theorem}[\cite{Sola}, Remark 4.5.]\label{sola}
		Let $\alpha\in \RR$ be fixed. Suppose $f\in D_\alpha(\BB)$ has $\capp_\alpha(\mathcal{Z}(f^*))>0.$ Then $f$ is not cyclic in $D_\alpha(\BB).$ 
	\end{theorem}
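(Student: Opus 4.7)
The plan is to invoke the capacitary obstruction of Theorem~\ref{sola}. Since the inclusion $D_\alpha(\BB)\subset D_\beta(\BB)$ for $\alpha\geq\beta$ is continuous, cyclicity in $D_\alpha(\BB)$ implies cyclicity in $D_\beta(\BB)$; contrapositively, it suffices to prove non-cyclicity for every $\alpha\in(3/2,2]$, as the case $\alpha>2$ follows by taking $\beta=2$. By Lemma~\ref{zero} the set $\mathcal Z(p)\cap\mathbb S_2$ contains a non-constant analytic arc $\gamma:(0,1)\to \mathbb S_2$. For a polynomial, the radial boundary values $p^*$ agree with $p|_{\mathbb S_2}$, so $\mathcal Z(p^*)\supset \operatorname{Im}(\gamma)$, and in view of Theorem~\ref{sola} it is enough to construct a Borel probability measure supported on $\operatorname{Im}(\gamma)$ whose Riesz $\alpha$-energy is finite.

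The main technical input is a lower bound on the anisotropic pre-distance along $\gamma$. Using $\langle\gamma(t),\gamma(t)\rangle\equiv 1$ and polarization one gets
$$2\operatorname{Re}\bigl(1-\langle\gamma(s),\gamma(t)\rangle\bigr)=|\gamma(s)-\gamma(t)|^2,$$
so $|1-\langle\gamma(s),\gamma(t)\rangle|\geq\tfrac12|\gamma(s)-\gamma(t)|^2$. Since $\gamma$ is a non-constant analytic map, $\gamma'$ has only isolated zeros; selecting a compact sub-interval $I$ on which $\gamma'$ does not vanish gives $|\gamma(s)-\gamma(t)|\asymp|s-t|$, hence
$$|1-\langle\gamma(s),\gamma(t)\rangle|\geq c|s-t|^2,\qquad s,t\in I.$$

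Let $\mu$ be the normalised push-forward of Lebesgue measure on $I$ by $\gamma$. For $\alpha\in(3/2,2)$ this yields
$$I_\alpha[\mu]\leq C\iint_{I\times I}|s-t|^{2(\alpha-2)}\,ds\,dt,$$
which is finite exactly because $2(\alpha-2)>-1$ is equivalent to $\alpha>3/2$. The endpoint $\alpha=2$ is handled identically: the kernel $\log(e/|1-\langle\zeta,\eta\rangle|)$ is dominated by a constant plus $\log(1/|s-t|)$, still locally integrable on $I\times I$. Thus $\capp_\alpha(\operatorname{Im}(\gamma))>0$, and Theorem~\ref{sola} delivers the non-cyclicity of $p$. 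The only real subtlety is the appearance of the sharp threshold $3/2$: it arises from testing a two-real-parameter singular integral $\iint|s-t|^{2(\alpha-2)}$ against the one-dimensional arc-length measure on $\gamma$, and it matches precisely the upper critical value in Theorem~\ref{main}, confirming the sharpness of both results.
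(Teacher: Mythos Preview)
Your proposal does not address the stated result. Theorem~\ref{sola} is the capacitary non-cyclicity criterion itself: if $\capp_\alpha(\mathcal Z(f^*))>0$ then $f$ is not cyclic in $D_\alpha(\BB)$. The paper does not prove this statement; it is quoted from \cite{Sola}, Remark~4.5. Your write-up, however, \emph{assumes} Theorem~\ref{sola} in its very first sentence and then uses it as a tool to derive a different conclusion, namely Theorem~\ref{non-cyclic} (non-cyclicity of a polynomial with infinitely many boundary zeros for $\alpha>3/2$). Invoking the theorem you are asked to prove is circular; nothing in your text touches the actual content of Theorem~\ref{sola}, which requires linking Riesz capacity on $\mathbb S_2$ to the reproducing-kernel structure of $D_\alpha(\BB)$.

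If your intended target was in fact Theorem~\ref{non-cyclic}, then the argument is correct and slightly more direct than the paper's. The paper first establishes the two-sided estimate $d(\gamma(\theta),\gamma(\theta'))\asymp|\theta-\theta'|$ via the local expansion \eqref{problematic h}, and then transports the known positive capacity of the model curve $\mathcal Z(1-2zw)\cap\mathbb S_2$ through the map $T=\omega\circ\gamma^{-1}$ using \cite[Theorem~A.4.4]{Ransford}. You bypass the model polynomial entirely: the polarization identity together with a local arc-length bound yields the one-sided inequality $|1-\langle\gamma(s),\gamma(t)\rangle|\geq c\,|s-t|^2$, which already suffices (since $K_\alpha$ is decreasing) to bound the Riesz energy of the pushed-forward Lebesgue measure by $\iint_{I\times I}|s-t|^{2(\alpha-2)}\,ds\,dt<\infty$ for $\alpha>3/2$. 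Both routes hit the same threshold for the same reason, but yours avoids the comparison-and-transport machinery.
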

	
	Note that $\mathcal{Z}(f^*)$ is the zero set in the sphere of the radial limits of $f.$ In particular, $\mathcal Z(p)\cap \mathbb{S}_2$ coincides with $\mathcal{Z}(p^*)$ when $p$ is a polynomial.
	
	\begin{proof}[Proof of Theorem~\ref{non-cyclic}]
		Let $p$ be a polynomial non-vanishing in the ball that has infinitely many boundary zeros. According to Lemma~\ref{zero}, the set of the boundary zeros contains at least one non-constant analytic curve. The model polynomial $1-2zw$ vanishes on an analytic curve $\mathcal Z(1-2zw) \cap \mathbb S_2 = \{(e^{i\theta}/\sqrt{2}, e^{-i\theta}/\sqrt{2}): \theta \in \mathbb R\}$ and satisfies $\text{cap}_\alpha(\mathcal Z(1-2zw)\cap \mathbb S_2)>0,$ when $\alpha>3/2,$ see \cite{Sola}. 
		
		Let $\gamma:(-\epsilon, \epsilon)\rightarrow \mathcal{Z}(p)\cap \mathbb S_2$ and $\omega:(-\epsilon, \epsilon)\rightarrow \mathcal{Z}(1-2zw)\cap \mathbb S_2$ be analytic functions yielding the above-mentioned curves. Of course, $\omega$ can be written with an explicit formula $\omega(\theta) = (e^{i\theta} /\sqrt2, e^{-i\theta}/\sqrt2)$. Note that $d(\omega(\theta), \omega(\theta'))\asymp |\theta-\theta'|$, where $d$ denotes the anisotropic distance in $\mathbb S_2.$ The same distance estimate is true for $\gamma$, namely $d(\gamma(\theta), \gamma(\theta'))\asymp |\theta-\theta'|$. To see it write $\gamma=(\gamma_1,\gamma_2)$. We can make, for the sake of simplicity, some additional assumptions: $\theta'=0$, $\gamma(0)= (0,1)$ (compose with a unitary matrix) and $\gamma_1'(0)\neq 1$. It follows from \eqref{problematic h} that $1 - \gamma_2(t) = O(\gamma_1^2(t)).$ Let us write $\gamma_1(t) = a_1 t + O(t^2),$ $\gamma_2(t) = 1+b_2 t^2 + O(t^3)$. Since $1 = |\gamma_1(t)^2| + |\gamma_2(t)|^2$, $t\in (-\epsilon, \epsilon)$, we get that $b_2\neq 0$. Then $d(\gamma(0), \gamma(\theta)) = |1-\gamma_2(\theta)|^{1/2} \asymp |\theta|$, as claimed.
		
		In particular, $T:=\omega\circ\gamma^{-1}$ transforms the analytic curve contained in $\mathcal{Z}(p)\cap \mathbb S_2$ into the one contained in $\mathcal{Z}(1-2zw)\cap \mathbb S_2$ so that $K_\alpha(|1- \langle T(\zeta), T(\eta)\rangle |) \asymp K_\alpha (1 - \langle \zeta, \eta \rangle )$. Now we can proceed as in \cite[Theorem 5.3.1]{Ransford}. Precisely, it follows from \cite[Theorem A.4.4]{Ransford} that for a probability measure supported in $\omega((-\epsilon,\epsilon))$ the measure $\nu=\mu T^{-1}$ is supported in $\gamma((-\epsilon, \epsilon))$. In particular, $I_\alpha[\mu]\asymp I_\alpha[\nu]$ for $\alpha>3/2$. From this we deduce that $\capp_\alpha(\gamma([0,1]))>0$ whenever $\capp_\alpha(\omega([0,1]))>0.$ Consequently, $\capp_\alpha(\mathcal{Z}(p)\cap \mathbb S_2)>0$ for $\alpha>3/2.$ 
		
		The assertion thus follows from Theorem~\ref{sola}.
	\end{proof}

	\begin{remark}\label{rem:final}
		Many of ideas presented above may be extended to Dirichlet-type spaces of $\mathbb B_n$ with $n\geq 3$. One can expect that the solution of this general problem depends on $\dim_{\mathbb R}(\mathcal Z(p)\cap\mathbb S_n).$ 
		
		Let us mention here that some partial results for the polydisc $\mathbb D^n$ with $n\geq 3$ were obtained in \cite{Ber}. A crucial difference between the polydisc and the ball lies in their geometry. They are not biholomorphic to each other, even their group of automorphisms (both transitive) are not isomorphic. What played a key role in studying the cyclicity of a polynomial in Dirichlet-type spaces over the bidisc (see \cite{Benetau, Knese}) was the size of its zero set on the Shilov (distinguished) boundary. Recall that the Shilov boundary of a given domain is the smallest subset of its topological boundary where an analog of the maximum modulus principle holds. Let us mention that it shows yet another important difference between these domains: the distinguished boundary of the ball coincides with its topological boundary, which is not the case for the polydisc -- its distinguished boundary is the torus.
		
		It is worth mentioning that Theorem~\ref{main result,theorem} has a simpler formulation on $\mathbb B_2$ comparing to its counterpart on $\mathbb D^2$. This is because in the bidisc one needs to treat $(1-z)$ and $(1-w)$ separately: they are cyclic even thought their zero sets $\mathcal Z(p)\cap \partial\mathbb D^2$ are quite large. 
	\end{remark}
	
	{\bf Acknowledgements.} We would like to thank the anonymous referees for numerous remarks that substantially improved the shape of the paper.

\end{document}